 \numberwithin{equation}{section}
\theoremstyle{plain}
\newtheorem{thm}{Theorem}[section]
\newtheorem{cor}[thm]{Corollary}
\newtheorem{lem}[thm]{Lemma}
\newtheorem{prop}[thm]{Proposition}
\theoremstyle{definition}
\newtheorem{defn}[thm]{Definition}
\theoremstyle{remark}
\newtheorem{rem}[thm]{Remark}
\newcommand{\N}{\mathbb{N}}
\newcommand{\R}{\mathbb{R}}
\newcommand{\E}{\mathbb{E}} 
\newcommand{\PP}{\mathbb{P}} 
\newcommand{\I}{\infty}
\newcommand{\diam}{\text{diam}}
\newcommand{\dist}{\text{dist}}
\newcommand{\Sn}{S_n(\R)}
\newcommand{\Mn}{M_n(\R)}
\newcommand{\tr}{\text{tr}}
\newcommand{\bp}{\begin{proof}[\ensuremath{\mathbf{Proof}}]}
\newcommand{\ep}{\end{proof}}
\newcommand{\beps}{\beta_\epsilon}
\newcommand{\bF}{\overline{F}}
\newcommand{\gfint}{\int_{B_r(x_0)} \!\!\!\!\!\!\!\!\!\!\!\!\!\!\!\!\!-}
\begin{document}


\title{On Hamilton-Jacobi-Bellman equations with convex gradient constraints}

\author{Ryan Hynd\footnote{Department of Mathematics, University of Pennsylvania. Partially supported by NSF grant DMS-1301628.}\; and 
Henok Mawi\footnote{Department of Mathematics, Howard University.}}  

\maketitle

\begin{abstract}
We study PDE of the form $\max\{F(D^2u,x)-f(x), H(Du)\}=0$ where $F$ is uniformly elliptic and convex in its first argument, $H$ is convex, $f$ is a given function and $u$ is the unknown. 
These equations are derived from dynamic programming in a wide class of stochastic singular control problems.  In particular, examples of these equations arise in mathematical finance models
involving transaction costs, in queuing theory, and spacecraft control problems. The main aspects of this work are to identify conditions  under which 
solutions are uniquely defined and have Lipschitz continuous gradients.  We also generalize previous results known for the case where $M\mapsto F(M,x)$ is the maximum of finitely many linear functions. 
\end{abstract}

\section{Introduction}
In 1979, L.C. Evans inaugurated the study of elliptic equations with gradient constraints when he considered the following Dirichlet problem: find a function $u:\overline{\Omega}\rightarrow \R$ satisfying the PDE
\begin{equation}\label{EvansLinear}
\max\left\{-a(x)\cdot D^2u - f(x), |Du|-g(x)\right\}=0, \quad x\in \Omega
\end{equation}
subject to the boundary condition 
$$
 u(x)=0, \quad x\in \partial \Omega
$$
\cite{E}. Here $\Omega\subset \R^n$ is a bounded domain with smooth boundary, $f$ and $g$ are smooth positive functions on $\Omega$, and the symmetric $n\times n$ matrix valued function $a=(a^{ij})$ is smooth and uniformly elliptic. 
That is, there are positive numbers $\lambda,\Lambda$ for which
\begin{equation}\label{LinUnifEll}
\lambda|\xi|^2\le a(x)\xi\cdot \xi\le \Lambda|\xi|^2, \quad x\in\overline{\Omega},\quad \xi\in \R^n. 
\end{equation}
In equation \ref{EvansLinear}, we have used the notation $Du=(u_{x_i})$, $D^2u=(u_{x_ix_j})$ and
$$
a(x)\cdot D^2u:=\tr(a(x)D^2u)=a^{ij}(x)u_{x_ix_j}. 
$$
\par The PDE \eqref{EvansLinear} is considered an elliptic equation with gradient constraint as solutions necessarily satisfy
$$
|Du|\le g(x), \quad x\in \Omega.
$$
Moreover, on the subset of $\Omega$ where the above inequality is strict, solutions satisfy the elliptic PDE
$$
-a(x)\cdot D^2u = f(x).
$$
As we will see below, equations such as \eqref{EvansLinear} are naturally interpreted as dynamic programming equations in the theory of stochastic singular control. 

\par Employing Bernstein's method, L.C. Evans showed that equation \eqref{EvansLinear} has a unique solution that satisfies the PDE \eqref{EvansLinear} almost everywhere and belongs to the space $W^{2,p}_\text{loc}(\Omega)\cap W_0^{1,\infty}(\Omega)$ for each $p\in [1,\infty)$. Furthermore, if the coefficient matrix $a=(a^{ij})$ is {\it constant}, then additionally $u\in W^{2,\infty}_\text{loc}(\Omega)$.  Shortly thereafter, M. Wiegner removed the requirement that the coefficient matrix $a=(a^{ij})$ is constant and derived an a priori $W^{2,\infty}_\text{loc}(\Omega)$ estimate on solutions \cite{W}. Finally, H. Ishii and S. Koike considered a version of the
above Dirichlet problem involving more general gradient constraints and verified solutions belong to the space $W^{2,\infty}(\Omega)$; these authors also showed with simple examples that if $f$ and $g$ are allowed to 
vanish simultaneously, uniqueness of solutions may fail. We also remark that H.M. Soner and S. Shreve studied closely related equations in two variables \cite{SS} and equations that involved special structure \cite{SS1, SS2}; and in both cases they verified the existence of classical solutions.

\par  M. Wiegner's regularity result was further extended by N. Yamada who considered the Dirichlet problem associated with the Bellman equation 
\begin{equation}\label{YamadaEq}
\max_{1\le k\le N}\left\{-a_k(x)\cdot D^2u - f(x), |Du|-g(x) \right\}=0, \quad x\in \Omega,
\end{equation}
where each $a_k$ satisfies \eqref{LinUnifEll}. N. Yamada used a clever argument (inspired by previous work of L.C. Evans and A. Friedman \cite{EF}) to verify the existence of a solution $u\in W^{2,\infty}_{\text{loc}}(\Omega)\cap W^{1,\infty}_0(\Omega)$ \cite{Y}.  To date, this is the best regularity result for nonlinear elliptic equations with gradient constraints.

\par In this paper, we develop the regularity theory further by considering viscosity solutions of a fully nonlinear analog of \eqref{EvansLinear} 
\begin{equation}\label{MainPDE}
\max\{F(D^2u,x)-f(x), H(Du)\}=0, \quad x\in \Omega.
\end{equation}
Observe the particular nonlinearity 
\begin{equation}\label{YamadaNon}
F(M,x)= \max_{1\le k\le N}\left\{-a_k(x)\cdot M\right\}
\end{equation}
and gradient constraint $H(p)=|p|-1$ correspond to \eqref{YamadaEq} provided $g\equiv 1$ (and likewise to \eqref{EvansLinear} when $N=1$).  Postponing the definition of viscosity solutions until the next 
section (see Definition \ref{ViscDefn}), let us first discuss the relevant structural conditions needed on $F$ to guarantee a reasonable theory associated to the PDE \eqref{MainPDE}.

Denoting $\Sn$ as the collection of real, symmetric $n\times n$ matrices, we assume that $F\in C(\Sn\times\overline{\Omega})$ and is uniformly elliptic in 
its first argument. That is, there are positive numbers $\lambda,\Lambda$ such that
\begin{equation}\label{FUnifEll}
-\Lambda \tr N\le F(M+N,x)- F(M,x)\le -\lambda \tr N
\end{equation}
for each $M, N\in\Sn$ with $N\ge 0$ and $x\in \overline{\Omega}$.  We also suppose throughout that $F$ is convex in its first argument
\begin{equation}\label{Fconvex}
F(sM+(1-s)N,x)\le sF(M,x)+(1-s)F(N,x),\quad M,N\in\Sn, \; s\in [0,1]
\end{equation}
and that there is $\Upsilon> 0$ for which
\begin{equation}\label{FXassump}
|F(M,x) - F(M,y)|\le \Upsilon (|M|+1)|x-y|, \quad x,y\in \overline{\Omega},\; M\in\Sn. 
\end{equation}
In \eqref{FXassump}, $|M|=\sqrt{\sum^n_{i,j=1}M_{ij}^2}$.
\par The main theorem of this work is as follows. 
\begin{thm}\label{mainTHM}
Let $\Omega\subset \R^n$ be a bounded domain with smooth boundary, $\varphi\in C(\partial \Omega)$ and $f\in C(\overline{\Omega})$. 
\\ (i) Assume that $H$ is convex and that there is $\underline{u}\in C^2(\Omega)\cap C(\overline{\Omega})$ satisfying 
\begin{equation}\label{ubarEqn}
\begin{cases}
\max\{F(D^2\underline{u},x)-f(x), H(D\underline{u})\}\le -\kappa , \quad x\in \Omega\\
\hspace{2.2in} \underline{u}=\varphi, \quad x\in \partial \Omega
\end{cases}.
\end{equation}
for some $\kappa>0$.  Moreover, suppose $F$ satisfies \eqref{TechAssF} below. Then there is a unique viscosity solution $u\in C(\overline{\Omega})$ of the PDE \eqref{MainPDE} subject to the boundary condition
\begin{equation}\label{newBC}
u(x)=\varphi(x), \quad x\in \partial \Omega.
\end{equation}
(ii) Assume further that there are positive numbers $\theta,\Theta$ such that $H$ satisfies  
\begin{equation}\label{Hassump}
\theta |\xi|^2\le D^2H(p)\xi\cdot \xi\le \Theta |\xi|^2, \quad  \xi\in \R^n
\end{equation}
for Lebesgue almost every $p\in \R^n$ and $f\in W^{1,\infty}(\Omega)$. Then 
$$
u\in W^{2,p}_\text{loc}(\Omega)\cap W^{1,\infty}(\Omega)
$$
for each $p\in [1,\infty)$.  \\
(iii) Additionally, if $f\in W^{2,\infty}(\Omega)$ and $F$ is independent of $x$, then 
$$
u\in W^{2,\infty}_\text{loc}(\Omega).
$$ 
\end{thm}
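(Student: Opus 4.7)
The plan is to address the three assertions in order of increasing difficulty, using each to prepare tools for the next. For part (i), uniqueness follows from a standard comparison principle for viscosity sub- and supersolutions of uniformly elliptic PDEs of the form $\max\{F(D^2u,x)-f,H(Du)\}=0$; the convexity of $F$ in its matrix argument together with the technical hypothesis on $F$ referenced in the statement should furnish the modulus of continuity needed to close the Ishii--Lions doubling-of-variables argument. For existence, I would invoke Perron's method of Ishii with $\underline{u}$ from \eqref{ubarEqn} as the lower barrier and the viscosity solution of the unconstrained equation $F(D^2\overline{u},x)=f$ with $\overline{u}|_{\partial\Omega}=\varphi$ as an upper barrier. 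The strict slack $-\kappa$ in \eqref{ubarEqn} is precisely what guarantees continuous attainment of $\varphi$ at the boundary.

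For part (ii), the estimate $u\in W^{1,\infty}(\Omega)$ is immediate: $H(Du)\le 0$ holds in the viscosity sense, and \eqref{Hassump} forces the sublevel set $\{H\le 0\}$ to be bounded. To upgrade to $W^{2,p}_\text{loc}$, I would approximate \eqref{MainPDE} by the penalized problems
\begin{equation*}
F(D^2 u_\epsilon, x) - f(x) + \beps(H(Du_\epsilon)) = 0 \quad\text{in } \Omega, \qquad u_\epsilon = \varphi \text{ on } \partial\Omega,
\end{equation*}
where $\beps$ is smooth, convex, nondecreasing, vanishing on $(-\infty,0]$ and approximating $+\infty\cdot \mathbf{1}_{(0,\infty)}$. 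Classical theory for convex uniformly elliptic equations produces $u_\epsilon\in C^{2,\alpha}$ converging locally uniformly to $u$. A uniform $L^\infty_\text{loc}$ bound on the penalty $\beps(H(Du_\epsilon))$, obtained by comparing $u_\epsilon$ to $\underline{u}$ and $\overline{u}$, reduces the equation to a uniformly elliptic one with bounded right-hand side, so Caffarelli's interior $W^{2,p}$ estimates for convex fully nonlinear equations apply and pass to the limit.

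For part (iii), Bernstein's method is the route. With $F$ independent of $x$ and $f\in W^{2,\infty}$, differentiating the penalized equation twice in a fixed direction $\xi\in\R^n$ produces a linear elliptic equation for $v=(u_\epsilon)_{\xi\xi}$ whose source is controlled by $f_{\xi\xi}$. The convexity of $F$ kills the quadratic term $F_{ij,kl}(D^2u_\epsilon)(u_\epsilon)_{ij\xi}(u_\epsilon)_{kl\xi}$ (which has the favorable sign), while the convexity of $\beps$ and the uniform convexity of $H$ from \eqref{Hassump} contribute two further nonnegative contributions that can simply be discarded. Analyzing the auxiliary function $w = \eta(x) v + \alpha |Du_\epsilon|^2$ (with smooth cutoff $\eta$ supported in $\Omega$ and $\alpha$ chosen large) at its interior maximum via the classical maximum principle yields an $\epsilon$-independent upper bound on $\max_{|\xi|=1}(u_\epsilon)_{\xi\xi}$. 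Combining this with uniform ellipticity, which bounds the most negative eigenvalue of $D^2 u_\epsilon$ in terms of its largest eigenvalue and the bounded quantity $F(D^2u_\epsilon)$, delivers a full local bound on $|D^2 u_\epsilon|$ and therefore $u\in W^{2,\infty}_\text{loc}(\Omega)$.

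The main obstacle, and the place where this paper advances past Yamada's result, is the Bernstein step in part (iii). For a piecewise linear $F=\max_k\{-a_k\cdot M\}$ one can work on each smooth stratum of $F$, where the second variation in $M$ vanishes and the linearized PDE is fully tractable; for a genuinely convex $F$ the quadratic term $F_{ij,kl}(\cdot)(\cdot)(\cdot)$ is only nonnegative, so it must be dropped rather than exploited, and convexity alone has to carry the estimate. The uniform convexity of $H$ in \eqref{Hassump} is likewise essential: without the lower bound $D^2H\ge \theta I$, the penalty contribution $\beps'(H(Du_\epsilon))\, D^2H(Du_\epsilon)$ could degenerate in directions where $Du_\epsilon$ approaches the free boundary $\{H=0\}$, and the first-order terms appearing in Bernstein's auxiliary function would lose their coercive absorber.
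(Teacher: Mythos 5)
Your overall architecture (Perron plus comparison for (i); penalization, penalty bound, Caffarelli $W^{2,p}$ for (ii); Bernstein for (iii)) is the paper's, but three load-bearing steps are missing or would fail as written. First, the comparison principle in part (i) is not the ``standard'' Ishii--Lions argument you invoke: the operator $\max\{F-f,H\}$ is not proper and the branch $H(Du)$ is first order, so in the doubling of variables both functions see the same gradient $p=(x_\eta-y_\eta)/\eta$; the subsolution gives $H(p)\le 0$ while the supersolution inequality may be satisfied solely through $H(p)\ge 0$, and then no second-order information is available and \eqref{TechAssF} never enters -- the argument simply does not close. The missing idea is to compare $v$ with the convex combination $\tau u+(1-\tau)\underline{u}$, $\tau\in(0,1)$: convexity of $H$ gives $H\le(1-\tau)H(D\underline{u})<0$ at the relevant gradient, forcing the supersolution to satisfy $F(Y,y_\eta)\ge f(y_\eta)$, while convexity of $F$ and the strict slack $\kappa$ in \eqref{ubarEqn} make the combination a strict subsolution, $F(X,x_\eta)\le f(x_\eta)-(1-\tau)\kappa$; only then do \eqref{IshiiCond} and \eqref{TechAssF} produce a contradiction, after which one sends $\tau\to1^-$. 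This is the actual role of $\kappa>0$ (boundary attainment of $\varphi$ comes instead from squeezing $u$ between $\underline{u}$ and $\overline{u}$), and it is where convexity of both $F$ and $H$ is genuinely used for uniqueness, not merely for regularity.

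Second, in part (ii) the uniform local bound on $\beta_\epsilon(H(Du^\epsilon))$ does not follow ``by comparing $u_\epsilon$ to $\underline{u}$ and $\overline{u}$'': that comparison only bounds $\|u^\epsilon\|_{L^\infty}$. One needs first an interior gradient estimate (a Bernstein argument on $\tfrac12\eta^2|Du^\epsilon|^2-\alpha(u^\epsilon-\underline{u})$, where the uniform convexity \eqref{Hassump} provides the coercive term multiplying $\beta_\epsilon'$), and then a maximum-principle argument applied to $\eta^2\beta_\epsilon(H(Du^\epsilon))$ itself, whose key negative term $-\beta_\epsilon'\theta\lambda\,\eta^2|D^2u^\epsilon|^2$ comes from $F_{M_{ij}}$ acting on $D^2H\,Du^\epsilon_{x_i}\cdot Du^\epsilon_{x_j}$ together with ellipticity. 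Third, in part (iii) your auxiliary function $\eta\,(u^\epsilon)_{\xi\xi}+\alpha|Du^\epsilon|^2$ omits the penalty term, and without it the maximum-principle computation cannot be closed: the drift $\beta_\epsilon' DH\cdot D(\cdot)$ hitting the cutoff generates terms of size $\beta_\epsilon'|D^2u^\epsilon|^2$ with $\beta_\epsilon'\sim 1/\epsilon$ unbounded, and these can only be absorbed by the $\beta_\epsilon'$-weighted good term $\alpha\beta_\epsilon' F_{M_{ij}}D^2H\,Du^\epsilon_{x_i}\cdot Du^\epsilon_{x_j}\le-\alpha\beta_\epsilon'\theta\lambda|D^2u^\epsilon|^2$, which is produced precisely by including $\alpha\beta_\epsilon(H(Du^\epsilon))$ in the test function; the paper's choice is $\eta^2\{[(u^\epsilon_{\xi\xi})^+]^2+\alpha\beta_\epsilon(H(Du^\epsilon))+\mu|Du^\epsilon|^2\}$, with $\alpha,\mu$ fixed in terms of $\theta,\lambda$. (Your recovery of the two-sided Hessian bound from the one-sided one via uniform ellipticity is correct and matches the paper.) Finally, since $F,H,f$ are only continuous/$W^{1,\infty}$ with \eqref{Hassump} holding a.e., the classical solvability of the penalized problem requires a preliminary mollification of $F$, $H$, $f$ and a stability passage $\delta\to0$ on the shrunken domains $\Omega_\delta$, a step your outline skips, though it is the most routine of the omissions.
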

\noindent Theorem \ref{mainTHM} is the first to address the regularity of solutions of general fully nonlinear elliptic equations with convex gradient constraints. However, 
we acknowledge that it does not improve N. Yamada's result \cite{Y} for the particular nonlinearity \eqref{YamadaNon}. While we do not have a counterexample, we have 
identified a clear technical obstruction to removing the assumption that $F$ is independent of $x$ in order to obtain an a priori $W^{2,\infty}_\text{loc}(\Omega)$ estimate.  See Remark \ref{WiegRem} below. 

\par This work also generalizes the first author's previous work \cite{Hynd}, which he considered equation \eqref{MainPDE} with $F(M,x)=-a(x)\cdot M$. In \cite{Hynd}, an a priori $W^{2,\infty}_{\text{loc}}(\Omega)$ estimate was derived on solutions under the assumption \eqref{Hassump}. 
However, an a priori $W^{2,p}_{\text{loc}}(\Omega)$ estimate was claimed to be obtained for arbitrary convex gradient constraint functions $H$. Upon further review, we now believe that the asserted $W^{2,p}_{\text{loc}}(\Omega)$  estimate requires a uniform convexity hypothesis similar to \eqref{Hassump}. 

\par It should also be noted that for a given convex gradient constraint function $H$,  if $\{H\le 0\}=\{G\le 0\}$ then the PDE \eqref{MainPDE} holds with $G$ replacing $H$. For instance if 
$H(p)=|p|-1$, we may use $G(p)=|p|^2-1$ which additionally satisfies \eqref{Hassump}. In view of the statement of Theorem \ref{mainTHM}, it is advantageous to work with uniformly convex gradient constraints when they are available. We also
acknowledge that we only consider gradient constraints that do not depend on the $x$ variable. However, readers may verify without much difficulty that Theorem \ref{mainTHM} holds for convex gradient
constraints  $H=H(p,x)$ that satisfy $\theta|\xi|^2\le D^2_pH(p,x)\xi\cdot \xi\le \Theta |\xi|^2$. 

\par In the work that follows, we will establish Theorem \ref{mainTHM} in a series of steps.  In section \ref{SecComp}, we verify a comparison result which proves part $(i)$. In sections, \ref{SecPen} and \ref{SecConv} we introduce  a penalized equation and derive some uniform estimates that will imply parts $(ii)$ and $(iii)$. Before proceeding to the proof of Theorem \ref{mainTHM}, let us first give a brief motivation of how equation \eqref{MainPDE} is derived in stochastic control theory. We refer readers to standard references such as Chapter VIII of \cite{FS} or Chapter 5 of \cite{Oks} for the necessary background material.  And for applications of stochastic singular control theory, we recommend \cite{BC,BSW} (spacecraft control) \cite{DN, DPZ} (mathematical finance) and \cite{Kush} (queueing theory).

\par {\bf Probabilistic interpretation of solutions}.  Let $(\Omega, {\cal F}, \PP)$ be a probability space with a standard $n$-dimensional Brownian motion $(W(t), t\ge 0)$. For a fixed set $U\subset \R^m$ $(m\in \N)$, we define a {\it control process} to be a triple $(\alpha, \rho,\xi)$ such that 
$$
\begin{cases}
(\alpha(t),\rho(t), \xi(t))\in U\times \R^n\times\R \\
(\alpha,\rho, \xi)\; \text{is adapted to the filtration generated by $W$}\\
|\rho(t)|=1, \; t\ge 0\;\; \text{$\PP$ almost surely}\\
\xi(0)=0\quad \text{$\PP$ almost surely}\\
t\mapsto \xi(t),\text{is non-decreasing, and is left continuous and has right limits $\PP$ almost surely}
\end{cases}.
$$
Associated to any control is an $\R^n$-valued diffusion process $(X^{\alpha,\rho, \xi})$ that satisfies the stochastic differential equation
\begin{equation}\label{ControlSDE}
\begin{cases}
dX(t)=\sigma(X(t), \alpha(t))dW(t) - \rho(t)d\xi(t)\quad t\ge 0\\
X(0)=x\in \R^n
\end{cases}.
\end{equation}
Here $\sigma: \R^n\times U\rightarrow \Mn$ is assumed to be continuous, where $\Mn$ is the collection of all real $n\times n$ matrices. We also suppose there is $L>0$ such that
\begin{equation}\label{LipschitzSig}
|\sigma(x,z)-\sigma(y,z)|\le L|x-y|, \quad x,y\in\overline{\Omega} 
\end{equation}
for each $z\in U$.  We note that under assumption \eqref{LipschitzSig}, \eqref{ControlSDE} has a solution for each control $(\alpha,\rho, \xi)$.

\par Recall that for a nonempty, closed, convex set $K\subset\R^n$, the corresponding support function is 
given by
$$
\ell(v)=\sup_{p\in K}v\cdot p, \quad v\in \R^n. 
$$
The optimization problem we are most interested in involves the following {\it value function}
\begin{equation}\label{ValueFun}
u(x):=
\inf_{\alpha,\rho, \xi}\E\int^\tau_0\left[f(X^{\alpha,\rho, \xi}(t))dt + \ell(\rho(t))d\xi(t)\right], \quad x\in \overline{\Omega}.
\end{equation}
Here $\tau:=\inf\{t\ge 0: X^{\alpha,\rho, \xi}(t)\notin\Omega \}$.  The problem of finding an optimal control process for $u(x)$ is one of {\it stochastic singular control}. This terminology is used as 
a typical control process $(\alpha, \rho,\xi)$ involves $\xi$ which may have samples paths that are not everywhere continuous. 

\par We will argue that the value function \eqref{ValueFun} satisfies a PDE of the form \eqref{MainPDE}.  To see this, we first consider the related value function 
$$
u^N(x):=
\inf_{\alpha,\rho, \gamma}\E\int^\tau_0\left[f(X^{\alpha,\rho, \gamma}(t))dt + \ell(\rho(t))\gamma(t)dt\right], \quad x\in \overline{\Omega}.
$$
Here $(X^{\alpha,\rho, \gamma})$ satisfies \eqref{ControlSDE} with $\xi(t)=\int^t_0\gamma(s)ds$, for a process $\gamma$ adapted to the filtration generated by $W$ with sample paths $[0,\infty)\ni t\mapsto\gamma(t)\in [0,N]$. Note the value function $u^N$ corresponds to 
a standard stochastic optimal control problem as the controlled diffusions $(X^{\alpha,\rho, \gamma})$ are pathwise continuous almost surely. 

\par In particular, $u^N$ is known to formally satisfy the Hamilton-Jacobi-Bellman equation 
\begin{align*}
0 & =\sup_{\substack{z\in U, |v|=1\\ 0\le r\le N}}\left\{-\frac{1}{2}\sigma(x,z)\sigma(x,z)^t\cdot D^2u^N-f(x) + r\left(Du^N\cdot v -\ell(v)\right) \right\}\\
  & = F(D^2u^N,x)-f(x)+N\left[H(Du^N)\right]^+
\end{align*}
where
\begin{equation}\label{HJBF}
F(M,x):=\sup_{z\in U}\left\{-\frac{1}{2}\sigma(x,z)\sigma(x,z)^t\cdot M\right\}
\end{equation}
and 
\begin{equation}\label{HJBH}
H(p):=\sup_{|v|=1}\left\{p\cdot v -\ell(v)\right\}
\end{equation}
(see VIII.2 of \cite{FS}).

\par Observe that $F(D^2u^N,x)-f(x)\le 0$, and for large $N$, we also expect $H(Du^N)\le 0$. Of course, this is a heuristic argument and only applies to the value function $u^N$. Nevertheless, it is possible 
to show rigorously that the value function $u$ satisfies \eqref{MainPDE} with $F$ given by \eqref{HJBF} and $H$ given by \eqref{HJBH} in the sense of viscosity solutions provide $u$ itself satisfies a dynamic programming principle. The following theorem details this connection; 
it's proof, however, will be omitted as the assertion follows from a straightforward generalization of Theorem 5.1 in section VIII of \cite{FS}. We also remind the reader that we will postpone a discussion of viscosity solutions until the next section. 

\begin{prop}
Assume that for each $x\in \Omega$ and each stopping time $T$ (with respect to the filtration generated by $W$), the value function $u$ satisfies the dynamic programming principle 
\begin{align*}
u(x)&=\inf_{\alpha,\rho, \xi}\E\left\{\int^{\tau\wedge T}_0\left[f(X^{\alpha,\rho, \xi}(t))dt + \ell(\rho(t))d\xi(t)\right] + u(X^{\alpha,\rho, \xi}(\tau\wedge T)) \right\}.
\end{align*}
Then $u$ is a viscosity solution of the Hamilton-Jacobi-Bellman equation \eqref{MainPDE} with $F$ given by \eqref{HJBF} and $H$ given by \eqref{HJBH}. 
\end{prop}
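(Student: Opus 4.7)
\medskip

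\noindent\textbf{Proof proposal.} The plan is to verify the subsolution and supersolution inequalities separately, in each case exploiting a different admissible family of controls in the dynamic programming principle. Throughout, I would fix a test function $\phi\in C^2$ near a point $x_0\in\Omega$.

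\medskip

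For the \emph{subsolution} property, assume $u-\phi$ attains a local maximum at $x_0$ with $u(x_0)=\phi(x_0)$. To establish $H(D\phi(x_0))\le 0$, fix a unit vector $v\in\R^n$ and test DPP against the control $(\alpha,\rho,\xi)$ with arbitrary $\alpha$, $\rho\equiv v$ and a pure jump $\xi(t)=\epsilon\mathbf{1}_{\{t>0\}}$, stopped at $T=0^+$. The DPP then yields
\[
\phi(x_0)=u(x_0)\le \ell(v)\epsilon+u(x_0-\epsilon v)\le \ell(v)\epsilon+\phi(x_0-\epsilon v),
\]
and dividing by $\epsilon$ and sending $\epsilon\to 0$ gives $D\phi(x_0)\cdot v-\ell(v)\le 0$; taking the supremum over unit $v$ yields $H(D\phi(x_0))\le 0$. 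For the PDE inequality, fix $z\in U$ and use the control $(\alpha,\rho,\xi)=(z,\cdot,0)$ and stopping time $T>0$ small. Apply It\^o's formula to $\phi(X(t))$, take expectations (noting the stochastic integral is a martingale when $X$ stays in a compact neighborhood of $x_0$), use DPP and $u\le\phi$ to obtain
\[
0\le \E\!\int_0^{T}\!\Bigl[f(X(t))+\tfrac{1}{2}\sigma(X(t),z)\sigma(X(t),z)^t\!\cdot D^2\phi(X(t))\Bigr]dt;
\]
dividing by $T$, letting $T\to 0^+$ and then taking the supremum over $z\in U$ gives $F(D^2\phi(x_0),x_0)-f(x_0)\le 0$. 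Combining the two yields the subsolution property.

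\medskip

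For the \emph{supersolution} property, suppose $u-\phi$ has a local minimum at $x_0$ with $u(x_0)=\phi(x_0)$, and assume for contradiction that
\[
\max\bigl\{F(D^2\phi(x_0),x_0)-f(x_0),\ H(D\phi(x_0))\bigr\}<-\delta
\]
for some $\delta>0$. By continuity there is $r>0$ so that the strict inequality persists on $B_r(x_0)$. For each $\epsilon>0$ choose an $\epsilon$-optimal control $(\alpha^\epsilon,\rho^\epsilon,\xi^\epsilon)$ associated to the stopping time $\tau_r\wedge T$, where $\tau_r$ is the exit time from $B_r(x_0)$ and $T$ is a fixed small constant. Applying It\^o's formula to $\phi(X^\epsilon(t))$, including the jump correction $\phi(X^\epsilon(s^+))-\phi(X^\epsilon(s))=-\int_0^{\Delta\xi^\epsilon(s)}D\phi(X^\epsilon(s)-r\rho^\epsilon(s))\cdot\rho^\epsilon(s)\,dr$, and using $F(D^2\phi,x)\ge -\tfrac12\sigma\sigma^t\cdot D^2\phi$ together with $H(D\phi)\ge D\phi\cdot\rho-\ell(\rho)$ valid on $B_r(x_0)$, yields
\[
\E\!\int_0^{\tau_r\wedge T}\!\Bigl[f+\tfrac12\sigma\sigma^t\!\cdot D^2\phi\Bigr]dt+\E\!\int_0^{\tau_r\wedge T}\!\bigl[\ell(\rho^\epsilon)-D\phi\cdot\rho^\epsilon\bigr]\,d\xi^\epsilon\ \ge\ \delta\,\E\bigl[(\tau_r\wedge T)+\xi^\epsilon(\tau_r\wedge T)\bigr].
\]
Combined with $u\ge\phi$ and DPP, this produces
\[
u(x_0)+\epsilon\ \ge\ \phi(x_0)+\delta\,\E\bigl[(\tau_r\wedge T)+\xi^\epsilon(\tau_r\wedge T)\bigr].
\]

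\medskip

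The main obstacle is the quantitative estimate $\E\bigl[(\tau_r\wedge T)+\xi^\epsilon(\tau_r\wedge T)\bigr]\ge c>0$ for a constant $c$ uniform in the control. I would obtain this by the standard dichotomy: on $\{\tau_r>T\}$ one has $\tau_r\wedge T=T$, while on $\{\tau_r\le T\}$ the bound $r\le\bigl|\int_0^{\tau_r}\sigma\,dW\bigr|+\xi^\epsilon(\tau_r)$ holds, so Doob's inequality for the martingale part combined with uniform boundedness of $\sigma$ forces $\xi^\epsilon(\tau_r)$ to be bounded below in expectation on that event; selecting $T$ sufficiently small compared to $r$ balances the two contributions. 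Granting this, choosing $\epsilon<\delta c$ contradicts $u(x_0)=\phi(x_0)$ and completes the proof. The delicate point, as noted, is handling the jump contribution to It\^o's formula and ensuring the martingale terms are genuine martingales after localizing by $\tau_r$; these are routine once one works with the left-continuous version of $\xi$ specified in the setup.
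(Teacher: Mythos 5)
The paper gives no proof of this proposition---it is explicitly omitted as a ``straightforward generalization'' of Theorem 5.1, Section VIII of Fleming--Soner---and your argument is precisely that standard one: jump controls and constant, drift-free controls for the subsolution half, and $\epsilon$-optimal controls plus It\^o's formula with the jump correction together with the uniform lower bound on $\E\bigl[(\tau_r\wedge T)+\xi^\epsilon(\tau_r\wedge T)\bigr]$ for the supersolution half, so your proposal is correct in outline and matches the intended proof. The one point to watch is that the dynamic programming principle as stated fixes the stopping time $T$ before taking the infimum over controls, whereas your supersolution step selects an $\epsilon$-optimal control for the control-dependent time $\tau_r\wedge T$; this is repaired by the routine conditioning argument (apply the ``$u(x)\le$'' half of the DPP at the random time $\tau_r\wedge T$ along the chosen control), exactly as in the cited reference.
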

Combining this proposition with the Theorem \ref{mainTHM} gives the following. 
\begin{cor}
Assume $f>0$ on $\overline{\Omega}$, 
$$
\frac{1}{2}\sigma(\cdot,z)\sigma(\cdot ,z)^t\; \text{satisfies \eqref{LinUnifEll} for each $z\in U$}
$$
and 
$$
K:=\{p\in \R^n: G(p)\le 0\}
$$
where $G$ satisfies \eqref{Hassump} and $G(0)<0$. Then the value function $u$ is the unique solution of \eqref{MainPDE} that satisfies $u|_{\partial\Omega}=0$, 
with $F$ given by \eqref{HJBF} and $H$ given by \eqref{HJBH}.  Moreover, $u\in W^{2,p}_{\text{loc}}(\Omega)\cap W^{1,\infty}_0(\Omega)$ for each $p\in [1,\infty)$.
Finally, if $\sigma$ is independent of $x$, $u\in W^{2,\infty}_{\text{loc}}(\Omega)$. 
\end{cor}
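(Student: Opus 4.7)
The plan is to reduce the corollary directly to Theorem \ref{mainTHM} by verifying its hypotheses for the $F$ and $H$ coming from \eqref{HJBF}--\eqref{HJBH}, then combining with the Proposition: the theorem will furnish a unique viscosity solution with the claimed regularity, and the Proposition will identify that solution with the value function $u$.

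First I would verify the structural conditions on $F$. As a supremum of linear-in-$M$ maps, $F$ is convex in $M$ with $F(0,x)=0$. Since each $\frac{1}{2}\sigma(x,z)\sigma(x,z)^t$ lies between $\lambda I$ and $\Lambda I$ uniformly in $(x,z)$, the inequality $-\Lambda\tr N\le -\tfrac12\sigma\sigma^t\cdot N\le-\lambda\tr N$ for $N\ge 0$ is preserved under $\sup_z$, giving \eqref{FUnifEll}; it also yields a uniform bound on $|\sigma\sigma^t|$, hence on $|\sigma|$, over $\overline\Omega\times U$. Combined with \eqref{LipschitzSig}, the polarization identity
\[
\sigma(x,z)\sigma(x,z)^t-\sigma(y,z)\sigma(y,z)^t=\sigma(x,z)[\sigma(x,z)-\sigma(y,z)]^t+[\sigma(x,z)-\sigma(y,z)]\sigma(y,z)^t
\]
yields \eqref{FXassump} after taking $\sup_z$. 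For the gradient constraint, the standard support-function identity gives $\{H\le 0\}=K=\{G\le 0\}$, so the remark following Theorem \ref{mainTHM} lets me replace $H$ by $G$ in \eqref{MainPDE}; the new constraint $G$ satisfies \eqref{Hassump} by hypothesis.

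Next I would construct the strict subsolution $\underline u$ required by Theorem \ref{mainTHM}(i). Using the smoothness of $\partial\Omega$, pick $\psi\in C^2(\overline\Omega)$ with $\psi>0$ on $\Omega$ and $\psi=0$ on $\partial\Omega$, and set $\underline u:=\epsilon\psi$. For $\epsilon$ small, $|D\underline u|\le\epsilon\|D\psi\|_\infty$ is arbitrarily small, so continuity of $G$ together with $G(0)<0$ gives $G(D\underline u)\le G(0)/2<0$; likewise $F(\epsilon D^2\psi,x)\to F(0,x)=0$ uniformly, and since $f\ge f_0:=\min_{\overline\Omega}f>0$ we obtain $F(D^2\underline u,x)-f(x)\le-f_0/2$. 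Taking $\kappa:=\tfrac12\min\{|G(0)|,f_0\}$ verifies \eqref{ubarEqn} with zero boundary data.

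With these inputs, Theorem \ref{mainTHM}(i) produces a unique viscosity solution $v\in C(\overline\Omega)$ of \eqref{MainPDE} (with $G$ in place of $H$) vanishing on $\partial\Omega$; the Proposition says the value function $u$ is also a viscosity solution with the same data, so uniqueness gives $u=v$. Part (ii) then yields $u\in W^{2,p}_{\text{loc}}(\Omega)\cap W^{1,\infty}(\Omega)$ for every $p\in[1,\infty)$, and since $u\in C(\overline\Omega)$ vanishes on $\partial\Omega$ this upgrades to $u\in W^{1,\infty}_0(\Omega)$. If $\sigma$ is independent of $x$, then \eqref{HJBF} shows $F$ is too, and part (iii) delivers $u\in W^{2,\infty}_{\text{loc}}(\Omega)$. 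The only nonroutine step is the construction of $\underline u$, and this is where the main technical obstruction lies: both strict inequalities $G(0)<0$ and $f>0$ are used essentially, the first to accommodate the gradient constraint strictly and the second to make the interior PDE strict, while still attaining zero boundary data.
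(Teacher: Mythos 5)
Your proposal is correct and follows essentially the same route as the paper: verify the structural hypotheses for the HJB nonlinearity, exhibit a strict subsolution with zero boundary data, use $\{H\le 0\}=\{G\le 0\}$ to bring in the uniformly convex constraint, and then invoke Theorem \ref{mainTHM}(i)--(iii) together with the Proposition. The only difference is that the paper takes $\underline{u}\equiv 0$ directly (since $F(O_n,x)=0$, $f>0$, and $G(0)<0$ forces $H(0)<0$), so your $\epsilon\psi$ construction, while valid, is more elaborate than needed.
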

\begin{proof}
By assumption, $H(0)<0$ and $\inf_\Omega f>0$. Therefore, $\underline{u}\equiv 0$ is a subsolution of \eqref{MainPDE} with $\kappa:=\max\{-\inf_\Omega f,H(0)\}$. 
Part $(i)$ of Theorem \ref{mainTHM} implies $u$ is the unique solution of \eqref{MainPDE} with $u|_{\partial\Omega}=0$.  Since $G\le 0$ if and only if $H\le 0$, $u$ satisfies  \eqref{MainPDE} with $G$ replacing $H$. By 
part $(ii)$ of Theorem \ref{mainTHM},  $u\in W^{2,p}_{\text{loc}}(\Omega)\cap W^{1,\infty}_0(\Omega)$ for each $p\in [1,\infty)$.  Likewise, if $\sigma$ is 
independent of $x$, then $F$ doesn't depend on $x$ and we conclude by part $(iii)$ of Theorem \ref{mainTHM}.
\end{proof}

\section{Comparison}\label{SecComp}
In this section, we will verify part $(i)$ of Theorem \ref{mainTHM}. In particular, we assume throughout this section that $H$ is convex and $\underline{u}$ satisfies \eqref{ubarEqn}. 
Our main assertion is that a comparison principle holds among viscosity sub- and supersolutions of PDE \eqref{MainPDE}. This fact is not obvious given that the equation is neither uniformly elliptic or proper.  What is interesting in this fully nonlinear framework is that the
convexity assumptions on $F$ and $H$ play a central role in this comparison principle. In particular, these are not merely assumptions to guarantee more regularity of solutions; these assumptions are also needed to verify the existence of a solution. Below, we will make use of the results and notation of the ``user guide"\cite{CIL}. First, let us recall the definition of viscosity sub- and supersolutions. 

\begin{defn}\label{ViscDefn}
A function $u\in USC(\Omega)$ is a {\it viscosity subsolution} of \eqref{MainPDE} if, whenever $\varphi\in C^2(\Omega)$ and $u-\varphi$ has a local maximum at $x_0\in \Omega$, 
$$
\max\{F(D^2\varphi(x_0),x_0)-f(x_0), H(D\varphi(x_0))\}\le 0.
$$
A function $v\in LSC(\Omega)$ is a {\it viscosity supersolution} of \eqref{MainPDE} if, whenever $\psi\in C^2(\Omega)$ and $u-\psi$ has a local minimum at $x_0\in \Omega$, 
$$
\max\{F(D^2\psi(x_0),x_0)-f(x_0), H(D\psi(x_0))\}\ge 0.
$$
A function $w\in C(\Omega)$ is a {\it viscosity solution} of \eqref{MainPDE} provided $w$ is a viscosity sub- and supersolution. 
 \end{defn}

\begin{prop}\label{CompareProp}
Suppose $u\in USC(\overline{\Omega})$ is a viscosity subsolution of \eqref{MainPDE}, $v\in LSC(\overline{\Omega})$ is a viscosity supersolution \eqref{MainPDE}, and $u\le v$ on $\partial \Omega$. Then $u\le v$.  
\end{prop}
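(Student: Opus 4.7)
The equation \eqref{MainPDE} has no proper zeroth-order term, so a direct application of the Crandall--Ishii--Lions machinery fails; the plan is to use $\underline u$ together with the convexity of $F$ and $H$ to promote $u$ to a strict viscosity subsolution, then carry out the doubling-of-variables argument against $v$ and extract a contradiction from the strictness. Fix $\tau\in(0,1)$ and a constant $C\ge\max_{\partial\Omega}(\underline u-v)$ (finite since $\underline u$ is continuous and $v$ is LSC on the compact set $\partial\Omega$), and define
$$
\tilde u_\tau := (1-\tau)u + \tau(\underline u - C).
$$
If $\phi\in C^2$ touches $\tilde u_\tau$ from above at $x_0\in\Omega$, then $\psi:=(\phi-\tau(\underline u-C))/(1-\tau)$ touches $u$ from above at $x_0$; writing $D\phi=(1-\tau)D\psi+\tau D\underline u$, $D^2\phi=(1-\tau)D^2\psi+\tau D^2\underline u$ and combining the subsolution inequality for $u$ at $\psi$ with \eqref{ubarEqn} and the convexity of $F$ in its first argument and of $H$, one obtains
$$
\max\{F(D^2\phi(x_0),x_0)-f(x_0),\, H(D\phi(x_0))\}\le -\tau\kappa.
$$
The choice of $C$ and the hypothesis $u\le v$ on $\partial\Omega$ then force $\tilde u_\tau\le v$ on $\partial\Omega$.

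\textbf{Doubling of variables and contradiction.} Suppose, for contradiction, that $\sigma:=\max_{\overline\Omega}(\tilde u_\tau - v)>0$. For $\epsilon>0$ introduce
$$
\Phi_\epsilon(x,y):=\tilde u_\tau(x) - v(y) - \tfrac{1}{2\epsilon}|x-y|^2
$$
on $\overline\Omega\times\overline\Omega$ with maximizer $(x_\epsilon,y_\epsilon)$. Standard estimates (\cite{CIL}, \S3) give, along a subsequence, $x_\epsilon,y_\epsilon\to\hat x\in\Omega$ (interior, since $\sigma>0$ cannot be attained on $\partial\Omega$) and $|x_\epsilon-y_\epsilon|^2/\epsilon\to 0$. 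Ishii's lemma furnishes $X,Y\in\Sn$ with $(p_\epsilon,X)\in\overline{J}^{2,+}\tilde u_\tau(x_\epsilon)$ and $(p_\epsilon,Y)\in\overline{J}^{2,-}v(y_\epsilon)$, where $p_\epsilon:=(x_\epsilon-y_\epsilon)/\epsilon$, together with the usual Ishii matrix inequality that forces $X\le Y$. The strict subsolution inequality yields $H(p_\epsilon)\le-\tau\kappa<0$ and $F(X,x_\epsilon)\le f(x_\epsilon)-\tau\kappa$, while the supersolution inequality combined with $H(p_\epsilon)<0$ gives $F(Y,y_\epsilon)\ge f(y_\epsilon)$. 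Subtracting and decomposing,
$$
\tau\kappa+\bigl[f(y_\epsilon)-f(x_\epsilon)\bigr] \le \bigl[F(Y,y_\epsilon)-F(X,y_\epsilon)\bigr]+\bigl[F(X,y_\epsilon)-F(X,x_\epsilon)\bigr].
$$
By \eqref{FUnifEll} and $X\le Y$ the first bracket on the right is $\le 0$; by continuity of $f$ the bracket on the left is $o(1)$; and the technical assumption \eqref{TechAssF}, coupling \eqref{FXassump} with the full Ishii matrix inequality, is designed precisely to drive the remaining bracket to $o(1)$ as $\epsilon\to 0^+$. The contradiction $\tau\kappa\le 0$ follows. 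Hence $\tilde u_\tau\le v$ on $\overline\Omega$, and letting $\tau\to 0^+$ (so that $\tilde u_\tau\to u$ pointwise) recovers $u\le v$.

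\textbf{Main obstacle.} The delicate point is controlling $F(X,y_\epsilon)-F(X,x_\epsilon)$: Ishii's lemma only forces $|X|=O(1/\epsilon)$, so the naive bound from \eqref{FXassump} is $O(|x_\epsilon-y_\epsilon|/\epsilon)$, which need not vanish despite $|x_\epsilon-y_\epsilon|^2/\epsilon\to 0$. This is exactly the role of the unstated hypothesis \eqref{TechAssF}---a joint continuity/ellipticity condition on $F$ in the spirit of (3.14) of \cite{CIL}---which exploits the full strength of the Ishii matrix inequality rather than merely the ordering $X\le Y$. The convexity and gradient-constraint structure play no further role in the comparison beyond constructing $\tilde u_\tau$ and supplying the sign $H(p_\epsilon)<0$ that collapses the doubling argument to a purely second-order problem.
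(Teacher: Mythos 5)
Your proposal is correct and follows essentially the same route as the paper: a convex combination with the strict subsolution $\underline{u}$ (you weight $(1-\tau)u+\tau(\underline u-C)$ and send $\tau\to0^+$, the paper weights $\tau u+(1-\tau)\underline u$ and sends $\tau\to1^-$, handling the boundary term without the constant shift), then doubling of variables, the Crandall--Ishii lemma, convexity of $H$ to force $H(p_\epsilon)<0$ so the supersolution yields $F(Y,y_\epsilon)\ge f(y_\epsilon)$, convexity of $F$ for the $\tau\kappa$ margin, and \eqref{TechAssF} to produce the contradiction. One cosmetic correction: \eqref{TechAssF} controls the full difference $F(Y,y_\epsilon)-F(X,x_\epsilon)$ under \eqref{IshiiCond} (the pair $(X,X)$ need not satisfy \eqref{IshiiCond}, so the isolated bracket $F(X,y_\epsilon)-F(X,x_\epsilon)$ is not what it bounds); simply apply it to the undecomposed difference, as the paper does, and the contradiction $\tau\kappa\le o(1)$ follows.
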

Before proving Proposition \ref{CompareProp}, we give a heuristic argument as to why we would expect this result to be true. We suppose 
$u,v\in C^2(\Omega)\cap C(\overline{\Omega})$ $\tau\in (0,1)$ and set 
$$
w^\tau:=\tau u +(1-\tau)\underline{u} - v.
$$
Now assume $w^\tau(x_0)=\max_{\overline{\Omega}}w^\tau$. If $x_0\in \Omega$, 
\begin{equation}\label{WtauCalc}
\begin{cases}
Dw^\tau(x_0)=\tau Du(x_0) + (1-\tau)D\underline{u}(x_0) - Dv(x_0)=0,\\
D^2w^\tau(x_0)=\tau D^2u(x_0) + (1-\tau)D^2\underline{u}(x_0) - D^2v(x_0)\le 0
\end{cases}.
\end{equation}
By the convexity of $H$,
\begin{align*}
H(Dv(x_0)) &=H\left(\tau Du(x_0)+(1-\tau)D\underline{u}(x_0)\right)\\
& \le \tau H(Du(x_0))+(1-\tau)H(D\underline{u}(x_0))\\
& \le (1-\tau)H(D\underline{u}(x_0))\\
&<0.
\end{align*}
And since $v$ is a supersolution, 
\begin{equation}\label{YesVsupsoln}
f(x_0)\le F(D^2v(x_0),x_0).
\end{equation}
While we always have by the convexity of $F$
\begin{align*}
F(D^2(\tau u +(1-\tau)\underline{u})(x_0),x_0)&\le \tau  F(D^2u(x_0),x_0)+(1-\tau)F(D^2\underline{u}(x_0),x_0)\\
& < \tau f(x_0)+(1-\tau)f(x_0)\\
& = f(x_0).
\end{align*}
Now by \eqref{WtauCalc} and the hypothesis that $F$ is elliptic 
\begin{equation}\label{NoVsupsoln}
F(D^2v(x_0),x_0)\le F(\tau D^2u(x_0) + (1-\tau)D^2\underline{u}(x_0))<f(x_0).
\end{equation}
As inequalities \eqref{YesVsupsoln} and \eqref{NoVsupsoln} are incompatible, it must be that $x_0\in \partial \Omega$.  In this case, 
\begin{align*}
\tau u +(1-\tau)\underline{u} - v&=w^\tau \\
&\le w^\tau(x_0)\\
&=\tau u(x_0) +(1-\tau)\underline{u}(x_0)-v(x_0) \\
&\le \tau v(x_0) +(1-\tau)\underline{u}(x_0)-v(x_0)\\
&= (1-\tau)(\underline{u}(x_0)-v(x_0))\\
&\le (1-\tau)\max_{\overline{\Omega}}(\underline{u}-v)
\end{align*}
Sending $\tau\rightarrow 1^-$ gives, $u\le v$. 
\par Now we proceed to the general situation. When studying the comparison of viscosity sub and super solutions of the elliptic PDE
$$
F(D^2u,x)=f(x)
$$
the following technical assumption is typically made: there is a function $\omega: [0,\infty)\rightarrow [0,\infty)$ that satisfies $\omega(0+)=0$ such 
that
\begin{equation}\label{TechAssF}
F(Y,y)- F(X,x)\le \omega\left(\frac{|x-y|^2}{\eta}\right)
\end{equation}
whenever 
\begin{equation}\label{IshiiCond}
\left(\begin{array}{cc}
X & 0 \\
0 & -Y
\end{array}
\right)\le\frac{3}{\eta}
\left(\begin{array}{cc}
I_n & -I_n \\
-I_n & I_n
\end{array}
\right)
\end{equation}
for $\eta>0$, $x,y\in \Omega$ and $X,Y\in\Sn$. See section 3 of \cite{CIL}. For instance, when $F(M,x)=-a(x)\cdot M$ with $a$ satisfying \eqref{LinUnifEll}, one may choose 
$$
\omega(r)=3\left(\text{Lip}(a^{1/2})\right)^2r.
$$
See example 3.6 in \cite{CIL}. Moreover, when $F$ is given by \eqref{HJBF}, we can take
$$
\omega(r)=\frac{3}{2}L^2r.
$$
\begin{proof}(of Proposition \ref{CompareProp}) Fix $\tau\in (0,1)$, define 
$$
w^{\tau,\eta}(x,y):=\tau u(x) +(1-\tau)\underline{u}(x)- v(y)-\frac{1}{2\eta}|x-y|^2
$$
for $x,y\in\overline{\Omega}$ and $\eta>0$. By assumption, $w^{\tau,\eta}\in USC(\overline{\Omega}\times \overline{\Omega})$ so this function 
has a joint maximum at some $(x_\eta,y_\eta)\in \overline{\Omega}\times \overline{\Omega}$. By Lemma 3.1 in \cite{CIL}, 
$$
\lim_{\eta\rightarrow 0^+}\frac{|x_\eta-y_\eta|^2}{\eta}=0.
$$
The same lemma also asserts the existence of a sequence of $\eta$ tending to zero such that $(x_\eta,y_\eta)\rightarrow (x_\tau,x_\tau)$ and 
$x_\tau$ is maximizer of $\tau u +(1-\tau)\underline{u} - v$. If $x_\tau
\in \partial\Omega$, we obtain the estimate
$$
\tau u +(1-\tau)\underline{u} - v\le (1-\tau)\max_{\overline{\Omega}}(\underline{u}-v)
$$
as above and send $\tau\rightarrow 1^{-}$ to conclude $u\le v$.  Let us now assume $x_\tau\in \Omega$ and without loss of generality 
$x_\eta,y_\eta\in \Omega$ for all $\eta>0$.

\par By the Crandall-Ishii Lemma (Theorem 6.1, Chapter V \cite{FS}) that there are $X,Y\in \Sn$ such that \eqref{IshiiCond} holds
and 
$$
\begin{cases}
\left(\frac{x_\eta-y_\eta}{\eta},X\right)\in \overline{J}^{2,+}(\tau u + (1-\tau)\overline{u})(x_\eta)\\
\left(\frac{x_\eta-y_\eta}{\eta},Y\right)\in \overline{J}^{2,-}v(y_\eta)
\end{cases}.
$$
Note that as $\underline{u}\in C^2(\Omega)$,
$$
\left(\frac{x_\eta-y_\eta}{\tau\eta}-\frac{(1-\tau)}{\tau}D\underline{u}(x_\eta),\frac{1}{\tau}X- \frac{(1-\tau)}{\tau}D^2\underline{u}(x_\eta)\right)
\in \overline{J}^{2,+}u(x_\eta).
$$
And by the convexity of $H$ 
\begin{align*}
H\left(\frac{x_\eta-y_\eta}{\eta}\right)&=H\left(\tau\left(\frac{x_\eta-y_\eta}{\tau\eta}-\frac{(1-\tau)}{\tau}D\underline{u}(x_\eta)\right)+(1-\tau)D\underline{u}(x_\eta)\right)\\
&\le \tau H\left(\frac{x_\eta-y_\eta}{\tau\eta}-\frac{(1-\tau)}{\tau}D\underline{u}(x_\eta)\right)+(1-\tau)H(D\underline{u}(x_\eta))\\
&\le (1-\tau)H(D\underline{u}(x_\eta))\\
&<0.
\end{align*}
Since $v$ is a supersolution of \eqref{MainPDE}, we have
$$
0\le F(Y,y_\eta) -f(y_\eta).
$$
\par Also notice that as $F$ is convex
\begin{align*}
F\left(X,x_\eta\right)&=F\left(\tau\left(\frac{1}{\tau}X- \frac{(1-\tau)}{\tau}D^2\underline{u}(x_\eta),x_\eta\right)+(1-\tau)D^2\underline{u}(x_\eta),x_\eta\right)\\
&\le \tau F\left(\frac{1}{\tau}X- \frac{(1-\tau)}{\tau}D^2\underline{u}(x_\eta),x_\eta\right)+(1-\tau)F\left(D^2\underline{u}(x_\eta),x_\eta\right)\\
&\le \tau f(x_\eta)+(1-\tau)F\left(D^2\underline{u}(x_\eta),x_\eta\right)\\
&= f(x_\eta) + (1-\tau)\left[F\left(D^2\underline{u}(x_\eta),x_\eta\right)-f(x_\eta)\right]\\
&\le f(x_\eta) - (1-\tau)\kappa.
\end{align*}
Therefore, 
\begin{align*}
(1-\tau)\kappa & \le f(x_\eta)-F\left(X,x_\eta\right) \\
&\le f(x_\eta)-f(y_\eta) + F(Y,y_\eta)-F\left(X,x_\eta\right)\\
&\le f(x_\eta)-f(y_\eta) +\omega\left(\frac{|x_\eta-y_\eta|^2}{\eta}\right).
\end{align*}
However, sending $\eta\rightarrow 0$ along an appropriate sequence gives a contradiction. Thus, $x_\tau\in\partial \Omega$ and the assertion follows. 
\end{proof}

\par Under the assumptions \eqref{FUnifEll}, \eqref{Fconvex}, and \eqref{FXassump}, the boundary value problem
$$
\begin{cases}
F(D^2u,x)=f(x), \quad x\in \Omega\\
\hspace{.64in} u=\varphi, \quad x\in \partial \Omega
\end{cases}
$$
has a unique classical solution $\overline{u}\in C^{2}(\Omega)\cap C(\overline{\Omega})$; see Theorem 17.17 and exercise 17.4 of \cite{GT}. This follows from the ``continuity method" and the celebrated Evans-Krylov a priori estimates for convex, fully nonlinear elliptic equations
\cite{EvansC2, KrylovC2}. And applying Perron's method, as detailed in \cite{IshiiPerron,CIL}, it is easily verified that
$$
u(x):=\sup\left\{w(x): w\; \text{is a viscosity subsolution of \eqref{MainPDE} with}\; \underline{u}\le w\le \overline{u}\right\}
$$ 
is the unique viscosity solution of \eqref{MainPDE} satisfying the boundary condition \eqref{newBC}. This concludes the proof of 
part $(i)$ of Theorem \ref{mainTHM}.

\section{Penalty method}\label{SecPen}
When studying the regularity of solutions \eqref{MainPDE}, it will be useful for us to differentiate 
the nonlinearity $F$ which is defined on $\Sn\times \Omega$. In order to conveniently do calculus on $F$ and to approximate $F$ with smooth nonlinearities, we will extend $F$
to a function defined on $\Mn\times \Omega$. The particular extension of $F$ we will employ is 
$$
\overline{F}(M,x):=F\left(\frac{1}{2}(M+M^t),x\right), \quad (M,x)\in \Mn\times \Omega. 
$$
The following lemma, stated without proof, asserts $\bF$ extends $F$ in a way that preserves the essential properties
of $F$. 
\begin{lem}
Assume \eqref{FUnifEll}, \eqref{Fconvex} and \eqref{FXassump}. Then $\bF$ satisfies
\begin{equation}\label{barFXassump}
\begin{cases}
-\Lambda \tr N\le \overline{F}(M+N,x)- \overline{F}(M,x)\le -\lambda \tr N \quad (N\ge 0)\\\\
\bF(sM+(1-s)N,x)\le s\bF(M,x)+(1-s)\bF(N,x)\\\\
\left|\bF(M,x) - \bF(M,y)\right|\le \Upsilon (|M|+1)|x-y| 
\end{cases}
\end{equation}
for $M,N\in\Mn$, $x,y\in \overline{\Omega}$, $s\in [0,1]$. 
\end{lem}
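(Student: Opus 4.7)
The plan is to exploit the fact that the symmetrization map $S:\Mn\to\Sn$ defined by $S(M) := \frac{1}{2}(M+M^t)$ is linear, orthogonal for the Frobenius inner product, and trace-preserving. Since $\bF(M,x) = F(S(M),x)$ by definition, each of the three conditions in \eqref{barFXassump} should transfer directly from the corresponding condition on $F$ via this map. I expect the entire proof to be a routine chase through these identities; the only point that requires a moment of care is how to read $N\ge 0$ when $N$ is not assumed symmetric.

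First I would dispatch the convexity. Because $S$ is linear, $S(sM + (1-s)N) = sS(M) + (1-s)S(N)$, so the middle inequality in \eqref{barFXassump} is just \eqref{Fconvex} applied to $S(M)$ and $S(N)$. Next, for the ellipticity line, I would interpret $N\ge 0$ on $\Mn$ in the natural quadratic-form sense $\xi^t N\xi \ge 0$ for all $\xi\in\R^n$, which is equivalent to $S(N)\ge 0$ in $\Sn$ since $\xi^t N\xi = \xi^t S(N)\xi$. Then
\begin{equation*}
\bF(M+N,x) - \bF(M,x) = F(S(M)+S(N),x) - F(S(M),x),
\end{equation*}
and \eqref{FUnifEll} bounds this between $-\Lambda\tr S(N)$ and $-\lambda\tr S(N)$. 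Since $\tr M^t = \tr M$, we have $\tr S(N) = \tr N$, so the first line of \eqref{barFXassump} follows.

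For the Lipschitz estimate in $x$, I would apply \eqref{FXassump} to get
\begin{equation*}
|\bF(M,x) - \bF(M,y)| = |F(S(M),x) - F(S(M),y)| \le \Upsilon(|S(M)| + 1)|x-y|,
\end{equation*}
and then observe that $|S(M)| \le |M|$ in Frobenius norm. This is because $S$ is the orthogonal projection of $\Mn$ onto $\Sn$ (with respect to the Frobenius inner product, the symmetric and skew-symmetric matrices are orthogonal complements), so projection is norm-non-increasing; alternatively it follows from the triangle inequality together with $|M^t| = |M|$.

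The main, and essentially only, conceptual point is the interpretation of $N\ge 0$ on $\Mn$, and once that is fixed everything reduces to linearity, trace-invariance, and norm-contraction of $S$. I do not anticipate any genuine obstacle, which is likely why the authors state the lemma without proof.
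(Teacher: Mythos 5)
Your argument is correct: the paper deliberately states this lemma without proof, and the intended justification is exactly the routine verification you give, using linearity, trace-invariance, and Frobenius-norm contraction of the symmetrization map (together with $\xi^{t}N\xi=\xi^{t}\tfrac12(N+N^{t})\xi$ to handle the meaning of $N\ge 0$). Your observation that $|\tfrac12(M+M^{t})|\le|M|$ because symmetrization is the orthogonal projection onto $\Sn$ is precisely the small point of care needed, so nothing is missing.
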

Therefore, we will identify the nonlinearity $F$ with its extension $\bF$ and assume without any loss of generality that $F$ is a function on $\Mn\times \Omega$. This 
is an assumption we will make for the remainder of this work. We also record that if $F$ is smooth,  \eqref{barFXassump} implies 
\begin{equation}\label{barFXassump2}
\begin{cases}
-\Lambda \tr N\le F_{M_{ij}}(M,x)N_{ij} \le -\lambda \tr N \quad (N\ge 0)\\\\
F_{M_{ij}M_{kl}}(M,x)N_{ij}N_{kl}\ge 0\\\\\
\left|F_{x_i}(M,x)\right|\le \Upsilon (|M|+1), \quad i=1,\dots, n 
\end{cases}
\end{equation}
for $M,N\in\Mn$, $x\in \Omega$.

We now consider the regularity of solutions \eqref{MainPDE}.   Following the work of L. C. Evans, we will employ the 
{\it penalty method}. That is we trade in the highly degenerate PDE with constraint \eqref{MainPDE}, for the family 
of approximating uniformly elliptic PDEs
\begin{equation}\label{penalized}
\begin{cases}
F(D^2u^\epsilon,x)+\beta_\epsilon(H(Du^\epsilon))=f(x), \quad x\in \Omega \\
\hspace{1.67in} u^\epsilon=\varphi, \hspace{.375in} x\in \partial\Omega
\end{cases}.
\end{equation}
Here the family $\{\beta_\epsilon\}_{\epsilon\in (0,1)}$ is assumed to satisfy 
\begin{equation}\label{betaAss}
\begin{cases}
\beta_\epsilon\in C^\infty(\R)\\
\beta_\epsilon(z)=0, \quad z\le 0\\
\beps(z)>0, \quad z>0\\
\beps'\ge0,\\
\beps''\ge0,\\
\beps(z)=(z-\epsilon)/\epsilon, \quad z\ge 2\epsilon\\
\end{cases}.
\end{equation}
We think of $\beta_\epsilon$ as a smooth approximation of $z\mapsto (z/\epsilon)^+$. Our goal is to obtain estimates of
solutions $u^\epsilon$ that are independent of $\epsilon$ in order control the term $\beta_\epsilon(H(Du^\epsilon))$.
Our intuition is that if this term is bounded independently of $\epsilon$, \eqref{betaAss} would force $H(Du^\epsilon)$ to become 
nonpositive as $\epsilon$ tends to $0$. 
\par We will also initially assume 
\begin{equation}\label{ExtraSmooth}
\begin{cases}
F\in C^\I(\Mn\times \Omega)\\
H\in C^\I(\R^n)\\
f\in C^\I(\Omega)
\end{cases}
\end{equation}
and later remove this assumption with an approximation argument. Observe that the nonlinearity in \eqref{penalized} is 
$$
F^\epsilon(M,p,x):=F(M,x)+\beta_\epsilon(H(p)).
$$
As $H$ satisfies \eqref{Hassump}, for each fixed $\epsilon>0$  
$$
|F^\epsilon(M,p,x)|\le C_\epsilon(1+|M|+|p|^2)
$$
for some $C_\epsilon>0$.  Also observe that for each $p\in \R^n$, $(M,x)\mapsto F^\epsilon(M,p,x)$ satisfies \eqref{barFXassump}. By the work of N. Trudinger on 
classical solutions to fully nonlinear equations with ``natural structure conditions", the boundary value problem \eqref{penalized}
has a unique classical solution $u^\epsilon\in C^\infty(\Omega)\cap C(\overline{\Omega})$; see Theorem 8.2 of \cite{Trudinger}. 

\par Let us now proceed to derive some a priori bounds on solutions of \eqref{penalized}. First, we note that the comparison principle associated with \eqref{penalized} implies  
\begin{equation}\label{uepsLinf}
\underline{u}(x)\le u^\epsilon(x)\le \overline{u}(x)\quad x\in \overline{\Omega}. 
\end{equation}
Moreover, according to the Alexandroff-Bakelman-Pucci maximum principle (Theorem 3.6 in \cite{CC}), there is a constant $C=C(\diam(\Omega),n,\lambda,\Lambda)$ such that
$$
\overline{u}(x)\le C\left(|\varphi|_{L^\I(\partial \Omega)} +|f|_{L^\infty(\Omega)} + |F(O_n,\cdot)|_{L^\infty(\Omega)}\right), \quad x\in \Omega.
$$ 
Here $O_n\in\Sn$ is the zero matrix.  As $\underline{u}|_{\partial\Omega}=\varphi$, we may combine the upper bound for $u^\epsilon$ with \eqref{uepsLinf} to get 
$$
|u^\epsilon|_{L^\infty(\Omega)}\le C\left( |\underline{u}|_{L^\I(\Omega)} +|f|_{L^\infty(\Omega)} + |F(O_n,\cdot)|_{L^\infty(\Omega)}\right).
$$
\par Using this $L^\infty$ estimate, we will bound $|Du^\epsilon(x)|$ independently of  $\epsilon\in (0,1)$ for $x$ belonging to compact subsets of $\Omega$. To this end, we 
shall make use of the uniform convexity assumption on $H$ \eqref{Hassump}, which in turn implies
\begin{align}\label{Coercive}
\begin{cases}
H(p)\ge H(0)+DH(0)\cdot p+\frac{\theta}{2}|p|^2\\
DH(p)\cdot p-H(p)\ge -H(0)+\frac{\theta}{2}|p|^2\\
|DH(p)|\le |DH(0)| + \sqrt{n}\Theta|p|
\end{cases}(p\in \R^n).
\end{align}
\par In our computations below there will be several constants that will depend on the various ``data" associated with the boundary value problem \eqref{penalized} but they will all be {\it independent} of all 
$\epsilon$ sufficiently small. It will be important for us to keep track of the dependence of constants on the data in order to later remove the smoothness assumption \eqref{ExtraSmooth}. For convenience, 
we make the following list of parameters 
\begin{align}\label{listOfPar}
\Pi:=(\lambda,\Lambda,\theta,\Theta, \Upsilon,n,\diam(\Omega), H(0),|DH(0)|,|F(O_n,\cdot)|_{L^\I(\Omega)}, |f|_{W^{1,\infty}(\Omega)}|, |\underline{u}|_{W^{1,\I}(\Omega)})
\end{align}
that we shall quote several times below.  

\begin{lem}\label{GradBoundUeps}
Let $\Sigma\subset\subset \Omega$ be open. There is a constant $C$ such that 
$$
|Du^\epsilon(x)|\le C, \quad x\in \Sigma
$$
for $\epsilon \in (0,1)$.   Here $C$ depends on $\Pi$ and $1/\dist(\Sigma,\partial \Omega)$.
\end{lem}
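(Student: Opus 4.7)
The plan is to apply Bernstein's method, following Evans and Hynd, but with careful attention to the penalty's unbounded first-order contribution. Since $u^\epsilon\in C^\infty(\Omega)$, I differentiate \eqref{penalized} in the direction $x_k$, multiply by $u^\epsilon_{x_k}$, and sum over $k$. Writing $v:=\tfrac12|Du^\epsilon|^2$ and introducing the linearized operator
\[
L\varphi := F_{M_{ij}}(D^2u^\epsilon,x)\,\varphi_{x_ix_j}+\beta_\epsilon'(H(Du^\epsilon))\,H_{p_i}(Du^\epsilon)\,\varphi_{x_i},
\]
a short calculation yields $Lv = F_{M_{ij}}u^\epsilon_{x_ix_k}u^\epsilon_{x_jx_k}+(f_{x_k}-F_{x_k})u^\epsilon_{x_k}$. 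By \eqref{barFXassump2}, the first term is at most $-\lambda|D^2u^\epsilon|^2$, and the bound $|F_{x_k}|\le \Upsilon(|D^2u^\epsilon|+1)$ combined with $f\in W^{1,\infty}$ and Young's inequality gives the fundamental estimate
\[
Lv\le C(\Pi)(1+v)-\tfrac{\lambda}{2}|D^2u^\epsilon|^2.
\]

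To localize to $\Sigma$, fix $\eta\in C_c^\infty(\Omega)$ with $\eta\equiv 1$ on $\Sigma$, $|D\eta|\le C/d$, $|D^2\eta|\le C/d^2$, where $d:=\dist(\Sigma,\partial\Omega)$, and set $w:=\eta^2 v$. Let $x_0\in\overline\Omega$ maximize $w$; the bound is trivial unless $\eta(x_0)>0$. At $x_0$ the critical-point condition $Dw=0$ gives the identity $\eta\,v_{x_i}=-2\eta_{x_i}v$, while $D^2 w(x_0)\le 0$ and the sign convention in \eqref{barFXassump2} force $Lw(x_0)\ge 0$ (the drift part of $L$ vanishes at the critical point). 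Expanding $Lw$ by the product rule, substituting the critical-point identity, and invoking the estimate on $Lv$ leads at $x_0$ to the key inequality
\[
\tfrac{\lambda}{2}\eta^2|D^2u^\epsilon|^2\le C(\Pi,1/d)(1+v)+C\,\eta\,v\,\beta_\epsilon'(H(Du^\epsilon))\,|DH(Du^\epsilon)|\,|D\eta|.
\]

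It remains to absorb the last, $\beta_\epsilon'$-dependent term. If $H(Du^\epsilon(x_0))\le 2\epsilon$, the coercivity bounds in \eqref{Coercive} force $\tfrac{\theta}{4}|Du^\epsilon(x_0)|^2\le H(Du^\epsilon(x_0))+C\le C(\Pi)$ and we are done. Otherwise $H(Du^\epsilon(x_0))>2\epsilon$, where \eqref{betaAss} gives $\beta_\epsilon(H)=(H-\epsilon)/\epsilon\ge H/(2\epsilon)$ and $\beta_\epsilon'(H)=1/\epsilon\le 2\beta_\epsilon(H)/H$. Combined with the linear growth $|DH(Du^\epsilon)|\le C(1+|Du^\epsilon|)$ and the coercive bound $H(Du^\epsilon(x_0))\ge (\theta/2)v(x_0)-C$, the bad term is controlled by $C\,\eta\,\beta_\epsilon(H(Du^\epsilon))(1+v^{1/2})|D\eta|$. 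The identity $\beta_\epsilon(H(Du^\epsilon))=f-F(D^2u^\epsilon,x)$ and \eqref{FUnifEll} yield $|D^2u^\epsilon|\ge c\beta_\epsilon(H(Du^\epsilon))-C$, so Young's inequality absorbs the bad term into $\tfrac{\lambda}{4}\eta^2|D^2u^\epsilon|^2$ and leaves $\eta^2(x_0)v(x_0)\le C(\Pi,1/d)$. Since $w=v$ on $\Sigma$, this is the desired bound.

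The main obstacle is that the first-order drift $\beta_\epsilon'(H(Du^\epsilon))H_{p_i}(Du^\epsilon)$ in $L$ is unbounded as $\epsilon\to 0$. Its absorption requires a three-step coupling: convexity of $\beta_\epsilon$ converts $\beta_\epsilon'$ into $\beta_\epsilon$, the uniform convexity of $H$ couples $\beta_\epsilon(H)$ to $|Du^\epsilon|^2$, and uniform ellipticity of $F$ together with the penalized equation couples $\beta_\epsilon(H)$ to $|D^2u^\epsilon|$. It is precisely this chain that makes the uniform convexity hypothesis \eqref{Hassump}—and not merely convexity of $H$—essential.
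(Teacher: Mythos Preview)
Your argument has a gap at the very last step. After you absorb the bad term via Young's inequality, what actually remains at the maximum point $x_0$ is
\[
\tfrac{\lambda}{4}\,\eta^2|D^2u^\epsilon|^2 \le C(\Pi,1/d)\,(1+v),
\]
not the conclusion $\eta^2(x_0)v(x_0)\le C$. The right-hand side carries $v=v(x_0)$ \emph{without} an $\eta^2$ factor: the localization pieces $v\,F_{M_{ij}}\eta_{x_i}\eta_{x_j}$ and $v\,\eta F_{M_{ij}}\eta_{x_ix_j}$, as well as the Young remainder, all contribute bare $Cv$ terms. Nothing you have written couples the left side $\eta^2|D^2u^\epsilon|^2$ back to $v$, so the inequality does not close. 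In the classical Bernstein method this closure is supplied by an auxiliary term in the test function, which you omitted.

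Your route can in fact be completed, but the missing step is not the one you stated. In the regime $H(Du^\epsilon(x_0))>2\epsilon$ you already have $\beta_\epsilon(H)\ge H/(2\epsilon)\ge c'\,v/\epsilon$ (for $v$ above a threshold depending only on $\Pi$); feeding this into $|D^2u^\epsilon|\ge c\,\beta_\epsilon(H)-C$ gives $\eta^2|D^2u^\epsilon|^2\ge c''\,\eta^2v^2/\epsilon^2-C$. Inserting this lower bound into the post-absorption inequality yields $\eta^2v^2/\epsilon^2\le C(1+v)$, hence $\eta^2(x_0)v(x_0)\le C\epsilon^2\le C$, which finally closes the estimate. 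This is a nontrivial additional argument that your write-up does not contain. By contrast, the paper builds the auxiliary term $-\alpha_\epsilon(u^\epsilon-\underline u)$ into the Bernstein function from the start: applying the linearized operator to it and using the convexity of $F$ together with the strict subsolution property of $\underline u$ produces a favourable contribution $-\alpha_\epsilon\beta_\epsilon'\,\tfrac{\theta}{4}\bigl(|Du^\epsilon|^2-C\bigr)$. Choosing $\alpha_\epsilon$ proportional to $M_\epsilon:=\sup_\Omega|\eta\,Du^\epsilon|$ then absorbs the cubic drift term and leads directly to a quadratic inequality $M_\epsilon^2\le C(M_\epsilon+1)$, without any need to exploit the $\epsilon$-scaling of $\beta_\epsilon$.
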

\begin{proof}
1. Let $\eta\in C^\I_c(\Omega)$ satisfy $0\le \eta\le 1$ and set
$$
M_\epsilon:=\sup_{x\in \Omega}|\eta(x)Du^\epsilon(x)|.
$$
In order to prove the lemma, we will first bound $M_\epsilon$ uniformly in $\epsilon\in (0,1)$. To this end, we study the maximum values of the function
$$
v^\epsilon(x):=\frac{1}{2}\eta^2(x)|Du^\epsilon(x)|^2 -  \alpha_\epsilon (u^\epsilon(x)-\underline{u}(x)) 
$$
for a nonnegative constant $\alpha_\epsilon$ to be selected below. We emphasize that in the computations that follow, all constants will depend only on $|\eta|_{W^{2,\infty}(\Omega)}$ and the list $\Pi$.

\par 2. Differentiating the PDE \eqref{penalized} with respect to $x_k$ gives 
\begin{equation}\label{1stDerPenEqn}
F_{M_{ij}}(D^2u^\epsilon,x)u^\epsilon_{x_i x_j x_k} + F_{x_k}(D^2u^\epsilon,x)+\beta'_\epsilon(H(Du^\epsilon))DH(Du^\epsilon)\cdot Du^\epsilon_{x_k}=f_{x_k}. 
\end{equation}
Direct computation combined with \eqref{1stDerPenEqn} also yields
\begin{align}\label{BernIdentity1}
F_{M_{ij}}v_{x_i x_j}+ \beta' H_{p_k}v_{x_k}&=\left(F_{M_{ij}}\eta_{x_i}\eta_{x_j} + \eta F_{M_{ij}}\eta_{x_i x_j}\right)|Du|^2  + \nonumber \\
& \quad\quad 4F_{M_{ij}}\eta\eta_{x_i} Du\cdot Du_{x_j}  +\eta^2 F_{M_{ij}}Du_{x_i}\cdot Du_{x_j}\nonumber \\
&\quad\quad - \beta' H_{p_k}(\alpha (u_{x_k}-\underline{u}_{x_k})- \eta\eta_{x_k}|Du|^2) -\eta^2u_{x_k}F_{x_k}\nonumber \\
& \quad\quad +\eta^2 u_{x_k}f_{x_k} - \alpha F_{M_{ij}}(u_{x_i x_j}-\underline{u}_{x_i x_j}).
\end{align}
Moreover, the convexity of $M\mapsto F(M,x)$, the various properties of
$\beta=\beta_\epsilon$, and \eqref{ubarEqn} leads to
\begin{align*}
-F_{M_{ij}}(u_{x_i x_j}-\underline{u}_{x_i x_j})&:=-F_{M_{ij}}(D^2u,x)(D^2u-D^2\underline{u})_{ij} \\ 
& \le F(D^2\underline{u},x) - F(D^2u,x)\\
&= F(D^2\underline{u},x)-f(x) +\beta(H(Du))\\
&< \beta(H(Du))\\
&\le \beta(H(D\underline{u})) +\beta'(H(Du))(H(Du) - H(D\underline{u}))\\
&=\beta'(H(Du))(H(Du) - H(D\underline{u})).
\end{align*}
And substituting this inequality into \eqref{BernIdentity1} gives 
\begin{align}\label{BernIdentity11}
F_{M_{ij}}v_{x_i x_j}+ \beta' H_{p_k}v_{x_k}&\leq \left(F_{M_{ij}}\eta_{x_i}\eta_{x_j} + \eta F_{M_{ij}}\eta_{x_i x_j}\right)|Du|^2  + \nonumber \\
& \quad\quad 4F_{M_{ij}}\eta\eta_{x_i} Du\cdot Du_{x_j}  +\eta^2 F_{M_{ij}}Du_{x_i}\cdot Du_{x_j}\nonumber \\
&\quad\quad - \beta'\left( \alpha(H_{p_k}(u_{x_k}-\underline{u}_{x_k})- H+H(D\underline{u}))- \eta H_{p_k}\eta_{k}|Du|^2\right) \nonumber \\
& \quad\quad +\eta^2 u_{x_k}f_{x_k}-\eta^2u_{x_k}F_{x_k}.
\end{align}
\par 3. Now let $x_0\in \overline{\Omega}$ be a point maximizing $v$. For a given $\alpha>0$, if $x_0\in \partial \Omega$
\begin{equation}\label{v1Upp}
v\le  C (\alpha+1).
\end{equation}
Otherwise $x_0\in \Omega$. In this case, if $\beta'\le 1<\frac{1}{\epsilon}$ then $H=H(Du(x_0))\le 2\epsilon\le 2$ by \eqref{betaAss}. In view of \eqref{Coercive}, $|Du(x_0)|^2$ is then bounded above uniformly in $\epsilon\in (0,1)$ which gives again \eqref{v1Upp}.

\par Let us now study the case $\beta'\ge 1$. We will use that $(F_{M_{ij}})$ satisfies \eqref{barFXassump2}. In particular, 
$$
\eta^2 F_{M_{ij}}Du_{x_i}\cdot Du_{x_j}\le -\eta^2\lambda |D^2u|^2
$$
which controls the term $4F_{M_{ij}}\eta\eta_{x_i} Du\cdot Du_{x_j}$ up to an expression of the form $C(|Du|^2+1)$.
This observation combined with the necessary conditions $Dv(x_0)=0$ and $D^2v(x_0)\le 0$ allow us to evaluate  \eqref{BernIdentity11} at the point $x_0$ and arrive
at 
\begin{align*}
0 & \le C(|Du|^2+1) - \beta'\left( \alpha(H_{p_k}(u_{x_k}-\underline{u}_{x_k})- H+H(D\underline{u}) )- \eta H_{p_k}\eta_{x_k}|Du|^2\right)\\
& \le C(|Du|^2+1) - \beta'\left(\frac{1}{2}\alpha\theta|Du-D\underline{u}|^2- \eta H_{p_k}\eta_{x_k}|Du|^2\right)\\
&\le C(|Du|^2+1)\beta' -\beta'\left( \frac{1}{4}\alpha\theta\left(|Du|^2 - C\right)- \eta C_0(1+|Du|)|Du|^2\right)\\
&\le \beta'\left[C(|Du|^2+1) -\left(  \frac{1}{4}\alpha\theta\left(|Du|^2 - C\right)- \eta C_0(1+|Du|)|Du|^2\right)\right].
\end{align*}
Multiplying through by $4\eta(x_0)^2$ gives
\begin{equation}\label{BernIdentity111}
0\le \beta'\left[C(\eta^2|Du|^2+1) -\alpha\theta\left( \eta^2|Du|^2 - C\right) + 4C_0(\eta^3|Du|^3+\eta^2|Du|^2)\right].
\end{equation}
\par 4. Now select 
$$
\alpha:=\frac{5C_0}{\theta} M_\epsilon
$$
and note $\alpha \ge (5C_0/\theta)|\eta(x_0)Du(x_0)|$. In particular, \eqref{BernIdentity111} becomes 
$$
0\le \beta'\left[C(\eta^2|Du|^2+1) -5C_0\eta|Du|\left(\eta^2|Du|^2 - C\right) + 4C_0(\eta^3|Du|^3+\eta^2|Du|^2)\right].
$$
As $\beta'\ge 1$, the expression in the brackets must be nonnegative. It follows that $\eta(x_0)^3|Du(x_0)|^3$ is bounded above 
by a quadratic function of $\eta(x_0)|Du(x_0)|$. Hence, $\eta(x_0)|Du(x_0)|$ is bounded uniformly in $\epsilon\in (0,1)$.  Again we are able to conclude \eqref{v1Upp}.
 Therefore, with our choice of $\alpha=\alpha_\epsilon$, in all cases we have 
$$
M_\epsilon^2=\sup_{\Omega}(|\eta Du^\epsilon|^2 )=2\sup_{\Omega}(v^\epsilon +\alpha_\epsilon (u^\epsilon-\underline{u} ))\le C(\alpha_\epsilon+1)= C\left(\frac{5C_0}{\theta}M_\epsilon+1\right).
$$
As a result, $M_\epsilon$ is bounded uniformly in $\epsilon\in (0,1)$. 

\par 5. Now assume $\Sigma\subset\subset \Omega$ is open and $r<\frac{1}{2}\dist(\Sigma,\partial\Omega)$.  Also let $y\in\Sigma$ and note by our choice in $r$, $B_{2r}(y)\subset \Omega$.  Next, choose an $\eta\in C^\infty_c(B_{2r}(y))$ with $0\le\eta\le 1$, $\eta\equiv 1$ in $B_r(y)$, and 
$$
|D\eta(x)|\le\frac{C}{r} \quad \text{and}\quad |D^2\eta(x)|\le\frac{C}{r^2}
$$
for $x\in B_{2r}(y)$. From the argument above 
$$
|Du^\epsilon(x)|\le C, \quad x\in B_r(y)
$$
for a constant $C$ depending on $1/r$ and the list $\Pi$. As $y\in \Sigma$ is arbitrary, the assertion follows. 
 \end{proof}
We now pursue $W^{2,p}_\text{loc}(\Omega)$ estimates on solutions. Our main assertion is that the function $x\mapsto \beta_\epsilon(H(Du^\epsilon(x)))$ is locally bounded, independently of all sufficiently small 
$\epsilon$. 
\begin{lem}\label{betaBound}
Let $\Sigma\subset\subset\Omega$. There is a constant $C$ such that 
$$
0\le \beta_\epsilon(H(Du^\epsilon(x)))\le C, \quad x\in \Sigma
$$
for all $\epsilon \in (0,1)$. Here $C$ depends on the list $\Pi$ and $1/\dist(\Sigma,\partial \Omega)$.
\end{lem}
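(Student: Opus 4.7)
The plan is a Bernstein-type maximum-principle argument applied directly to $w^\epsilon := \beta_\epsilon(H(Du^\epsilon))$, localized by a cutoff, with the uniform convexity of $H$ supplying a coercive quadratic term. Write $\beta = \beta_\epsilon$, $u = u^\epsilon$, $w = w^\epsilon$. Differentiating \eqref{penalized} in $x_k$ gives $F_{M_{ij}}u_{x_ix_jx_k} = f_{x_k} - F_{x_k} - w_{x_k}$. Computing $D^2w$ directly and tracing against $F_{M_{kl}}$ produces a $\beta''$-term paired with the rank-one PSD matrix $(H_{p_m}u_{x_mx_k})(H_{p_i}u_{x_ix_l})$, a $\beta'$-term paired with $H_{p_ip_m}u_{x_mx_k}u_{x_ix_l} = ((D^2u)(D^2H)(D^2u))_{kl}$, and a term $\beta'H_{p_i}F_{M_{kl}}u_{x_ix_kx_l}$ into which the differentiated PDE is substituted. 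Both matrices are PSD (the second by \eqref{Hassump}), so by \eqref{barFXassump2} both remainders are $\leq 0$, and the convexity bound $D^2H\geq \theta I$ gives the quantitative estimate $\operatorname{tr}((D^2u)(D^2H)(D^2u)) \geq \theta|D^2u|^2$. Bounding $|DH(Du)|$ via Lemma \ref{GradBoundUeps} on a slight enlargement of $\Sigma$, and absorbing $|F_{x_k}|\leq \Upsilon(|D^2u|+1)$ by Young's inequality, I arrive at
$$\mathcal{L}w := -F_{M_{kl}}w_{x_kx_l} - \beta' H_{p_i}w_{x_i} \;\geq\; \tfrac{1}{2}\lambda\theta\,\beta'\,|D^2u|^2 - C\beta'.$$

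To convert the right side into coercivity in $w$, the PDE $F(D^2u,x) = f-w$ combined with the ellipticity estimate $|F(M,x)-F(O_n,x)|\leq 2\Lambda\sqrt{n}\,|M|$ forces $|D^2u|\geq (w-C_*)/(2\Lambda\sqrt{n})$, where $C_* := |f|_{L^\infty} + |F(O_n,\cdot)|_{L^\infty}$. Thus whenever $w\geq 2C_*$,
$$\mathcal{L}w \;\geq\; c_0 \beta' w^2 - C_0 \beta'.$$
To localize, fix $\eta\in C_c^\infty(\Omega)$ with $0\leq\eta\leq 1$, $\eta\equiv 1$ on $\Sigma$, $\operatorname{supp}(\eta)\subset\subset\Omega$, and examine $v := \eta^2 w$ at a maximum $x_0\in\overline{\Omega}$. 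Either $v(x_0)=0$ (so $w\leq 0$ on $\Sigma$) or $x_0$ is an interior maximum. In the latter case $Dv(x_0)=0$ and $D^2v(x_0)\leq 0$ force $w_{x_k}(x_0) = -2\eta_{x_k}w/\eta$ and, after tracing against $-F_{M_{kl}}$ (which dominates $\lambda I$ on PSD matrices), the pointwise bound $\eta(x_0)^2(-F_{M_{kl}}w_{x_kx_l})(x_0) \leq C_\eta\, w(x_0)$, with $C_\eta$ controlled by $|D\eta|_{L^\infty}$ and $|D^2\eta|_{L^\infty}$. Substituting and estimating the first-order contribution through $|DH(Du)||D\eta|\leq C$ yields
$$c_0\,\eta(x_0)^2\,\beta'\,w(x_0)^2 \;\leq\; C_\eta\, w(x_0) + C\,\eta(x_0)\,\beta'\,w(x_0) + C_0\,\eta(x_0)^2\,\beta'.$$

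To conclude I distinguish cases by the size of $w(x_0)$. If $w(x_0)\leq 1$ then $\max v \leq 1$ and we are done. Otherwise $\beta_\epsilon(H(Du^\epsilon(x_0))) > 1 = \beta_\epsilon(2\epsilon)$ and monotonicity of $\beta_\epsilon$ forces $H(Du^\epsilon(x_0)) > 2\epsilon$, whence $\beta'(H(Du^\epsilon(x_0))) = 1/\epsilon \geq 1$ by \eqref{betaAss}. Dividing the displayed quadratic inequality by $\beta'$ and solving for $\eta(x_0)^2 w(x_0)$ gives $\max v\leq C$, hence $w\leq C$ on $\Sigma$. Choosing $\eta$ supported in the $r$-neighborhood of $\Sigma$ with $r<\tfrac{1}{2}\dist(\Sigma,\partial\Omega)$ and $|D^j\eta|\leq C/r^j$ produces the stated dependence of the constant on $\Pi$ and $1/\dist(\Sigma,\partial\Omega)$.

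The main technical obstacle lies in the Bernstein remainder terms from $D^2w$: the coefficients $\beta''$ and $\beta'$ of their matrix pairings can blow up like $1/\epsilon^2$ and $1/\epsilon$ as $\epsilon\to 0$, so without additional structure these terms would destroy any $\epsilon$-independent estimate. The combination of convexity of $\beta_\epsilon$, uniform convexity of $H$, and ellipticity of $-F_M$ is exactly what makes them usable: the first remainder is nonpositive and can be dropped, while the second yields precisely the coercive factor $\beta'|D^2u|^2$ which, via the PDE, becomes the $\beta' w^2$ dissipation needed to absorb the unbounded multiplier $\beta'$ on the left of the final quadratic inequality.
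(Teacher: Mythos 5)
Your proposal is correct and follows essentially the same route as the paper: a Bernstein maximum-principle argument on $\beta_\epsilon(H(Du^\epsilon))$ localized by $\eta^2$, in which the $\beta''$-term is discarded by convexity, the $\beta'$-term yields the coercive $\lambda\theta\beta'|D^2u|^2$ via the uniform convexity of $H$ and inequality \eqref{ElemIneq}, and the PDE plus uniform ellipticity converts this into a bound on $\eta^2\beta_\epsilon(H(Du^\epsilon))$ at the maximum point. The only differences (localizing after deriving the differential inequality rather than before, and handling the degenerate case via $w(x_0)\le 1$ instead of $\beta'=0\Rightarrow\beta=0$) are cosmetic.
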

From equation \eqref{penalized},
$$
F(D^2u^\epsilon,x)=-\beta_\epsilon(H(Du^\epsilon(x)))+f(x).
$$
And in view of Lemma \ref{GradBoundUeps} and Lemma \ref{betaBound}, the right hand side above is bounded in $L^\infty_{\text{loc}}(\Omega)$ independently of $\epsilon\in (0,1)$.  Therefore, we can apply the interior $W^{2,p}$ estimate for fully nonlinear elliptic equations due to L. Caffarelli (Theorem 1 of \cite{CaffAnn}) to obtain a $W^{2,p}_{\text{loc}}(\Omega)$ estimate estimate on $u^\epsilon$ that is independent of $\epsilon\in (0,1)$. 
\begin{cor}\label{CorW2p}
Let $p\ge 1$. For a given $\Sigma\subset\subset\Omega$, there is a constant $C$ for which
$$
|D^2u^\epsilon|_{L^p(\Sigma)}\le C
$$
for all $\epsilon \in (0,1)$. Here $C$ depends $p$, $\Sigma$, $1/\dist(\Sigma,\partial \Omega)$ and the list $\Pi$. 
\end{cor}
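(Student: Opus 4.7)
The plan is to combine the uniform $L^\infty$ control on the penalty term provided by Lemma \ref{betaBound} with Caffarelli's interior $W^{2,p}$ estimate for fully nonlinear convex elliptic equations. The point is that once the nonlinear constraint piece $\beta_\epsilon(H(Du^\epsilon))$ is known to be locally bounded, the penalized PDE \eqref{penalized} collapses to a standard uniformly elliptic, convex equation with a bounded right-hand side, at which point Caffarelli's theorem applies directly.

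Concretely, rewrite \eqref{penalized} as
$$
F(D^2 u^\epsilon(x), x) = g^\epsilon(x), \qquad g^\epsilon(x) := f(x) - \beta_\epsilon(H(Du^\epsilon(x))).
$$
Fix $\Sigma \subset\subset \Omega$ and pick an intermediate open set $\Sigma'$ with $\Sigma \subset\subset \Sigma' \subset\subset \Omega$ and $\dist(\Sigma, \partial \Sigma'),\,\dist(\Sigma',\partial \Omega) \ge \tfrac{1}{4}\dist(\Sigma,\partial \Omega)$. Applying Lemma \ref{betaBound} on $\Sigma'$ and using $f \in L^\infty$ gives $\|g^\epsilon\|_{L^\infty(\Sigma')} \le C$ uniformly in $\epsilon \in (0,1)$, with $C$ depending only on the list $\Pi$ and $1/\dist(\Sigma, \partial \Omega)$.

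Next, I would invoke Theorem 1 of \cite{CaffAnn} applied to $u^\epsilon$ on $\Sigma'$. The hypotheses needed for that estimate are in hand: uniform ellipticity \eqref{FUnifEll}, convexity in $M$ from \eqref{Fconvex} (which, through Evans--Krylov, supplies the interior $C^{1,1}$ regularity for frozen-coefficient solutions underlying Caffarelli's perturbation argument), and the Lipschitz dependence on $x$ from \eqref{FXassump} (which comfortably dominates the Dini-type oscillation condition Caffarelli imposes on $x \mapsto F(M,x)$). For each $p \ge 1$, this yields
$$
\|D^2 u^\epsilon\|_{L^p(\Sigma)} \le C\bigl(\|u^\epsilon\|_{L^\infty(\Sigma')} + \|g^\epsilon\|_{L^p(\Sigma')} + 1\bigr),
$$
with $C$ depending on $p$, $\Sigma$, $1/\dist(\Sigma,\partial\Omega)$, and $\Pi$. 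The uniform $L^\infty$ bound on $u^\epsilon$ was already established via the ABP argument following \eqref{uepsLinf}, and $\|g^\epsilon\|_{L^p(\Sigma')} \le |\Sigma'|^{1/p}\|g^\epsilon\|_{L^\infty(\Sigma')}$ is controlled by the previous paragraph. Both terms on the right are therefore bounded in terms of $\Pi$, and the corollary follows.

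The only place that needs some care is verifying that the $x$-dependence of $F$ fits Caffarelli's framework, since his original statement is usually phrased for operators either independent of $x$ or satisfying a small oscillation / Cordes--Nirenberg assumption in $x$. This is not a serious obstacle here because \eqref{FXassump} gives Lipschitz control on $F$ in $x$, and once the ellipticity-normalized oscillation of $F(M,\cdot)$ on balls of small radius is read off, the required modulus is automatic. A subsidiary point worth flagging is that $g^\epsilon$ does depend on $u^\epsilon$ through $Du^\epsilon$, but Lemma \ref{betaBound} furnishes the $L^\infty$ bound on $g^\epsilon$ without differentiating this composition, so $g^\epsilon$ acts genuinely as a bounded exogenous forcing for the Caffarelli estimate.
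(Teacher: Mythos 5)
Your proposal follows essentially the same route as the paper: use Lemma \ref{GradBoundUeps} and Lemma \ref{betaBound} to see that $g^\epsilon=f-\beta_\epsilon(H(Du^\epsilon))$ is a locally bounded right-hand side for the uniformly elliptic convex equation $F(D^2u^\epsilon,x)=g^\epsilon$, then invoke Caffarelli's interior $W^{2,p}$ estimate on small balls (where \eqref{FXassump} keeps the $x$-oscillation of $F$ small) and patch up by covering $\Sigma$. The one step you declare ``automatic'' is exactly the step the paper does not treat as automatic: in the form of Caffarelli's theorem stated in \cite{CC} (Theorem 7.1), the smallness hypothesis is on $\sup_{M\neq 0}|F(M,x)-F(M,y)|/|M|$, and \eqref{FXassump} only bounds the oscillation normalized by $|M|+1$; since $(|M|+1)/|M|\to\infty$ as $M\to 0$, Lipschitz continuity in $x$ does \emph{not} make the $|M|$-normalized oscillation small on small balls, no matter how small the radius. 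The paper handles this by inspecting the proofs of Theorem 7.1 and Lemma 7.9 of \cite{CC} to confirm that the estimate survives with the $|M|+1$ normalization \eqref{beta1cond}, after which the choice $r\lesssim \beta_1/\Upsilon$ makes \eqref{FXassump} applicable; if you instead rely on Theorem 1 of \cite{CaffAnn} you must verify that its oscillation functional is already normalized by $\|M\|+1$ (or renormalize and justify it), rather than asserting the modulus is immediate. Aside from making that verification explicit, and spelling out the finite covering of $\overline{\Sigma}$ by such small balls, your argument matches the paper's.
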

\begin{proof} It suffices to verify the assertion for each $p>n$. By Theorem 7.1 of \cite{CC}, there is a constant $\beta_0$ such that for each
for $B_{2r}(x_0)\subset \Omega$
\begin{equation}\label{CaffW2pEst}
\left(\gfint\; |D^2u^\epsilon(x)|^pdx\right)^{1/p}\le \frac{c_0}{r^2}\left\{|u^\epsilon|_{L^\I(B_{2r}(x_0))} +|-\beta_\epsilon(H(Du^\epsilon))+f -F(O_n,\cdot) |_{L^\infty(B_{2r}(x_0))}\right\}
\end{equation}
as long as 
$$
\sup_{M\neq 0}\frac{|F(M,x)-F(M,y)|}{|M|}\le \beta_0, \quad x,y\in B_{2r}(x_0).
$$
Here $\beta_0$ and $c_0$ only depend on $\lambda,\Lambda,p$ and $n$.  A careful inspection of the proof of Theorem 7.1 in \cite{CC} (and Lemma 7.9) shows an estimate of the form \eqref{CaffW2pEst} holds provided 
\begin{equation}\label{beta1cond}
\sup_{M\neq 0}\frac{|F(M,x)-F(M,y)|}{|M|+1}\le \beta_1, \quad x,y\in B_{2r}(x_0)
\end{equation}
for a constant $\beta_1$ depending only on $\lambda, \Lambda, p$, and $n$.   See also the beginning of the proof of Theorem 8.1 in \cite{CC} for a related condition. 

\par In view of the assumption \eqref{FXassump}, it follows that \eqref{beta1cond} holds for each  $B_{2r}(x_0)\subset\Omega$ with $r\le \frac{\beta_1}{2\Upsilon}.$  Now fix 
$$
0<r<\min\left\{\frac{1}{4}\dist(\Sigma,\partial\Omega),\frac{\beta_1}{2\Upsilon}\right\}.
$$
With this choice of $r$, the estimate \eqref{CaffW2pEst} combined with the proofs of Lemma \ref{GradBoundUeps} and Lemma \ref{betaBound} imply
\begin{equation}\label{CaffW2pEst2}
|D^2u^\epsilon|_{L^p(B_r(x_0))}\le C_1
\end{equation}
for some constant $C_1$ depending on  $p$, $r$, and the list $\Pi$. By the compactness of $\overline{\Sigma}$, there are finitely many balls $B_r(x_1),\dots, B_r(x_m)$ that cover $\overline{\Sigma}$ with 
$\{x_i\}_{i=1,\dots,m}\subset \Sigma$. Thus, 
\begin{align*}
\int_{\Sigma}|D^2u^\epsilon(x)|^pdx & \le\int_{\cup^m_{i=1}B_r(x_i)}|D^2u^\epsilon(x)|^pdx \\
& \le \sum^{m}_{i=1}\int_{B_r(x_i)}|D^2u^\epsilon(x)|^pdx \\
&\le mC_1^p.
\end{align*}
\end{proof}
In the proof of Lemma \ref{betaBound} below, we will make use of the following  elementary inequality. If $S, T\in \Sn$ and 
the eigenvalues of $S$ and $T$ are greater than or equal to $s\ge 0$ and $t\ge 0$, respectively, then
\begin{equation}\label{ElemIneq}
S\cdot (X T X)\ge s t |X|^2, \quad X\in \Sn. 
\end{equation}
To verify inequality \eqref{ElemIneq}, we first write $S=O\text{diag}(s_1,\dots s_n)O^t$ for some $s_1,\dots, s_n\ge s$ and orthogonal $n\times n$ matrix $O$. Then we calculate
\begin{align*}
S\cdot (X T X) & =\text{tr}(SXTX)\\
&=\text{tr}(O\text{diag}(s_1,\dots s_n)O^tXTX)\\
&=\text{tr}(\text{diag}(s_1,\dots s_n)O^tXTXO)\\
&=\sum^n_{i=1}s_i (O^tXTXO)_{ii}\\
&=\sum^n_{i=1}s_i (O^tXTXO)e_i\cdot e_i\\
&=\sum^n_{i=1}s_i \left(TXOe_i\cdot XOe_i\right)\\
&\ge s\sum^n_{i=1}TXOe_i\cdot XOe_i\\
&\ge st\sum^n_{i=1}XOe_i\cdot XOe_i\\
&=st|X|^2.
\end{align*}
\begin{proof} (of Lemma \ref{betaBound})
1. Let $\eta\in C^\infty_c(\Omega)$ satisfy $0\le \eta\le 1$ and set
$$
v^\epsilon(x)=\eta(x)^2\beta_\epsilon(H(Du^\epsilon(x))), \quad x\in \Omega.
$$
We first attempt to bound $v^\epsilon$ from above by a universal constant.  Again, we suppress $\epsilon$ dependence and function arguments to compute 
\begin{align}\label{W2pCero}
F_{M_{ij}}v_{x_i x_j} +\beta'H_{p_k}v_{x_k} & = 2(F_{M_{ij}}\eta_{x_i}\eta_{x_j} + \eta F_{M_{ij}}\eta_{x_i x_j} )\beta + 4 F_{M_{ij}}\eta\eta_{x_i}\beta' DH\cdot Du_{x_j} \nonumber \\
& \quad + \eta^2\beta'' F_{M_{ij}}(DH\cdot Du_{x_i})(DH\cdot Du_{x_j}) +\eta^2\beta' F_{M_{ij}}D^2HDu_{x_i}Du_{x_j}  \nonumber \\
& \quad + \beta'H_{p_k}\left[\eta^2\left(f_{x_k}- F_{x_k}\right) +2\eta\eta_{x_k}\beta\right].
\end{align}
Let us now estimate each term in the identity \eqref{W2pCero}.  Below, each constant $C$ will depend only on the list $\Pi$ and $|\eta|_{W^{2,\infty}(\Omega)}$.
\par 2. By the convexity of $\beta=\beta(z)$ and the previous lemma, 
$$
\eta \beta(H(Du))\le  \eta H(Du)\beta'(H(Du))\le C\beta'(H(Du))
$$ 
for some constant $C$. Thus
\begin{equation}\label{W2pUno}
(F_{M_{ij}}\eta_{x_i}\eta_{x_j} + \eta F_{M_{ij}}\eta_{x_i x_j} )\beta\le C\beta'.
\end{equation}
Likewise 
\begin{equation}\label{W2pDos}
4 F_{M_{ij}}\eta\eta_{x_i}\beta' DH\cdot Du_{x_j} \le C\beta'\eta |D^2u|;
\end{equation}
and by the convexity of $\beta$ and the ellipticity of $F$
\begin{equation}\label{W2pDosDos}
\quad \eta^2\beta'' F_{M_{ij}}(DH\cdot Du_{x_i})(DH\cdot Du_{x_j}) \le 0.
\end{equation}
Moreover, appealing to \eqref{ElemIneq}
\begin{equation}\label{W2pTres}
\eta^2\beta' F_{M_{ij}}D^2HDu_{x_i}Du_{x_j}\le -\beta'\theta\lambda \eta^2|D^2u|^2.
\end{equation}
\par 3. By assumptions \eqref{FUnifEll} and \eqref{FXassump}, $|F(M,x)|\le C(|M|+1)$. Thus,
$$
0\le \beta = f -F\le C(1+|D^2u|).
$$
It now follows from \eqref{barFXassump2} that
\begin{align}\label{W2pQuarter}
 \beta'H_{p_k}\left[\eta^2\left(f_{x_k}- F_{x_k}\right) +2\eta\eta_{x_k}\beta\right]
 &\le C\beta' |DH|\left\{\eta^2(1+|D^2u|) + \eta |D\eta|(1+|D^2u|)\right\}\nonumber \\
 &\le C\beta' \eta(1+|D^2u|)(\eta +|D\eta|)\nonumber \\
 &\le C\beta'(1+\eta |D^2u|).
\end{align}
Combining \eqref{W2pUno}, \eqref{W2pDos} \eqref{W2pDosDos}, \eqref{W2pTres}, and \eqref{W2pQuarter}, \eqref{W2pCero} becomes
\begin{equation}\label{W2pCinco}
F_{M_{ij}}v_{x_i x_j} +\beta'H_{p_k}v_{x_k} \le \beta'\left(C +C\eta|D^2u|-\theta\lambda \eta^2|D^2u|^2\right).
\end{equation}
\par 4. Now suppose that $v(x_0)=\max_{\overline{\Omega}} v$ for some $x_0\in \Omega$.  Then by calculus 
$$
Dv(x_0)=0\quad \text{and}\quad D^2v(x_0)\le 0.
$$
Appealing to \eqref{W2pCinco}, we have at $x_0$
$$
0\le \beta'\left(C +C\eta|D^2u|-\theta\lambda \eta^2|D^2u|^2\right).
$$
If $\beta'=0$, then $\beta=0$; so we may as well assume $\beta'>0$. In this case, at the point $x_0$
$$
\theta\lambda \eta^2|D^2u|^2\le C +C\eta|D^2u|,
$$
which in turn implies $\eta(x_0)|D^2u(x_0)|\le C$. As a result, 
$$
v\le v(x_0)\le \eta(x_0)\beta(H(Du(x_0)))\le C(1+\eta(x_0) |D^2u(x_0)|)\le C.
$$
As in the proof of Lemma \ref{GradBoundUeps}, we conclude by choosing appropriate test functions $\eta$ to localize our uniform supremum bound on $v^\epsilon$. 
\end{proof}
Now we turn to establishing a priori $W^{2,\infty}_\text{loc}(\Omega)$ estimates for solutions of \eqref{penalized} that are independent of $\epsilon\in (0,1)$.  We 
now make the specific assumption that $F$ is independent of $x$. That is, we assume $F(M,x)=F(M)$.  The function $u^\epsilon$ now satisfies the penalized equation 
\begin{equation}\label{penalizedNox}
F(D^2u^\epsilon)+\beta_\epsilon(H(Du^\epsilon))=f(x), \quad x\in \Omega.
\end{equation}
Differentiating \eqref{penalizedNox} twice with respect any direction $\xi\in \R^n$ ($|\xi|=1$) gives
\begin{align}\label{DiffTwiceEqn}
F_{M_{ij}M_{i'j'}}(D^2u^\epsilon)u^\epsilon_{x_i x_j \xi}u^\epsilon_{x_{i' }x_{j'} \xi}+ F_{M_{ij}}(D^2u^\epsilon)u^\epsilon_{x_i x_j \xi\xi} +\beta_\epsilon''(H(Du^\epsilon))(DH(Du^\epsilon)\cdot Du^\epsilon_\xi)^2 \nonumber \\ 
+\beta_\epsilon'(H(Du^\epsilon))(D^2H(Du^\epsilon)Du^\epsilon_\xi\cdot Du^\epsilon_\xi + DH(Du^\epsilon)\cdot Du_{\xi\xi} )=f_{\xi\xi}.
\end{align}
\par We also remark that in obtaining a $W^{2,\infty}_\text{loc}(\Omega)$ estimate it is sufficient to obtain a one sided bound
\begin{equation}\label{SemiConcaveEnough}
D^2u^\epsilon(x)\le CI_n\quad x\in \Sigma
\end{equation}
for each open $\Sigma\subset\subset\Omega$.  Indeed, the uniform ellipticity of $F$ would then imply
$$
\lambda\tr(CI_n -D^2u^\epsilon)\le F(D^2u^\epsilon)-F(D^2u^\epsilon+(CI_n-D^2u^\epsilon))\le \Lambda\tr(CI_n -D^2u^\epsilon).
$$
Moreover, since $F(D^2u^\epsilon)=f-\beta_\epsilon(DH(Du^\epsilon))$ and  $F(D^2u^\epsilon+(CI_n-D^2u^\epsilon))=F(CI_n)$ are locally bounded, independently of $\epsilon \in (0,1)$, $\tr(CI_n-D^2u^\epsilon)$ would enjoy a similar bound. It would then follow that $-\Delta u^\epsilon$ is bounded on $\Sigma$ independently of $\epsilon\in (0,1)$. Let us write $|\Delta u^\epsilon|_{L^\infty(\Sigma)}\le C_1,$ in this case. 

\par Suppose $|\xi|=1$ and $\xi,\xi_1,\xi_2,\dots, \xi_{n-1}$ is an orthonormal basis for $\R^n$. From the upper bound \eqref{SemiConcaveEnough} and the bound on $\Delta u^\epsilon$, we would have for $x\in \Sigma$
$$
u^\epsilon_{\xi\xi}(x)= \Delta u^\epsilon(x)-\sum^{n-1}_{i=1}u^\epsilon_{\xi_i\xi_i}(x)\ge -C_1- (n-1)C.
$$
It would then follow
$$
D^2u^\epsilon(x)\ge - (C_1+(n-1)C)I_n,\quad x\in \Sigma.
$$
Therefore, in proving the assertion below, we just need to bound $D^2u^\epsilon$ from above. 

\begin{lem}\label{W2INflem}
For each $\Sigma\subset\subset\Omega$, there is a constant $C$ for which
$$
|D^2u^\epsilon|_{L^\I(\Sigma)}\le C
$$
for all $\epsilon \in (0,1)$. Here $C$ depends on the list $\Pi$, $|f|_{W^{2,\I}(\Omega)}$ and $1/\dist(\Sigma,\partial \Omega)$.
\end{lem}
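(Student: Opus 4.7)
The strategy is a Bernstein-type argument obtained by differentiating the penalized equation \eqref{penalizedNox} twice, in the spirit of Lemmas~\ref{GradBoundUeps} and~\ref{betaBound}. By the discussion immediately preceding the lemma, it suffices to establish the one-sided (semi-concavity) estimate $u^\epsilon_{\xi\xi}(x)\le C$ for $x\in\Sigma$, unit $\xi\in\R^n$, and $\epsilon\in(0,1)$. Write
\begin{equation*}
L\phi := F_{M_{ij}}(D^2u^\epsilon)\phi_{x_ix_j} + \beta'_\epsilon(H(Du^\epsilon))H_{p_k}(Du^\epsilon)\phi_{x_k}
\end{equation*}
for the linearization of the penalized operator along $u^\epsilon$. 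Differentiating \eqref{penalizedNox} once in $x_k$ (the $F_{x_k}$ term vanishes because $F$ is $x$-independent), multiplying by $u^\epsilon_{x_k}$, summing, and applying \eqref{ElemIneq} with $S=-F_{M_{ij}}$, $T=I$, $X=D^2u^\epsilon$ yields the first key identity
\begin{equation*}
L\bigl(|Du^\epsilon|^2/2\bigr) \;\le\; Du^\epsilon\cdot Df - \lambda|D^2u^\epsilon|^2.
\end{equation*}
Differentiating \eqref{penalizedNox} twice in a unit direction $\xi$ as in \eqref{DiffTwiceEqn} and discarding the nonnegative contributions from convexity of $F$ in $M$, convexity of $\beta_\epsilon$, and \eqref{Hassump} (the last giving $\beta'_\epsilon D^2H\,Du^\epsilon_\xi\cdot Du^\epsilon_\xi\ge\beta'_\epsilon\theta|Du^\epsilon_\xi|^2\ge\beta'_\epsilon\theta(u^\epsilon_{\xi\xi})^2$) yields the second key identity
\begin{equation*}
Lu^\epsilon_{\xi\xi} \;\le\; f_{\xi\xi} \;-\; \beta'_\epsilon\theta\,(u^\epsilon_{\xi\xi})^2.
\end{equation*}

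Fix $\Sigma\subset\subset\Omega$, choose $\eta\in C^\infty_c(\Omega)$ with $0\le\eta\le 1$, $\eta\equiv 1$ on $\Sigma$, and $|D\eta|,|D^2\eta|\le C/r$, and let $\alpha>0$ be a parameter to be selected. Let $(x_0,\xi_0)\in\overline\Omega\times S^{n-1}$ attain
\begin{equation*}
M := \sup_{x\in\overline\Omega,\,|\xi|=1}\eta(x)^2\bigl(u^\epsilon_{\xi\xi}(x)+\alpha|Du^\epsilon(x)|^2\bigr).
\end{equation*}
If $M\le 0$ we are done; otherwise $x_0\in\Omega$ (since $\eta$ vanishes on $\partial\Omega$) and $\eta(x_0)>0$. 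With $\xi_0$ now fixed, set $\phi:=u^\epsilon_{\xi_0\xi_0}+\alpha|Du^\epsilon|^2$ and $v:=\eta^2\phi$; then $v\le\eta^2(\lambda_{\max}(D^2u^\epsilon)+\alpha|Du^\epsilon|^2)$ with equality at $x_0$, so $v$ also attains its maximum over $\overline\Omega$ at $x_0$, giving $Dv(x_0)=0$ and $D^2v(x_0)\le 0$, hence $Lv(x_0)\ge 0$ by the sign convention in \eqref{barFXassump2}. Expand $Lv = \phi\,L\eta^2 + \eta^2 L\phi + 2F_{M_{ij}}(\eta^2)_{x_i}\phi_{x_j}$. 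Using $Dv(x_0)=0$ to substitute $\eta\,\phi_{x_j}=-2\eta_{x_j}\phi$ turns the cross term into $-8F_{M_{ij}}\eta_{x_i}\eta_{x_j}\phi$, which lies in $[0,\,8\Lambda|D\eta|^2\phi]$ and is thus bounded by $C\phi$. Estimating $|L\eta^2|\le C(1+\beta'_\epsilon)$ via Lemma~\ref{GradBoundUeps} and \eqref{Coercive}, and combining the two key identities together with $|D^2u^\epsilon|^2\ge(u^\epsilon_{\xi_0\xi_0})^2$, one arrives at
\begin{equation*}
\bigl(\eta^2\beta'_\epsilon\theta+2\alpha\lambda\eta^2\bigr)U^2 \;\le\; C(1+\alpha)(1+\beta'_\epsilon) + CU(1+\beta'_\epsilon),
\end{equation*}
where $U:=u^\epsilon_{\xi_0\xi_0}(x_0)$, which may be assumed positive. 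A weighted Young's inequality absorbs $CU$ into $\alpha\lambda\eta^2U^2/2$ (at cost $C/(\alpha\lambda\eta^2)$) and $C\beta'_\epsilon U$ into $\eta^2\beta'_\epsilon\theta U^2/2$ (at cost $C\beta'_\epsilon/(\eta^2\theta)$); multiplying through by $\eta(x_0)^2$ and writing $M_0:=\eta(x_0)^2U$ produces
\begin{equation*}
M_0^2\bigl(\tfrac{1}{2}\beta'_\epsilon\theta+\tfrac{3}{2}\alpha\lambda\bigr) \;\le\; C\bigl(1+\alpha+\alpha^{-1}+\beta'_\epsilon+\alpha\beta'_\epsilon\bigr).
\end{equation*}
A short case split on whether $\beta'_\epsilon\theta$ exceeds $3\alpha\lambda$ then shows $M_0$ is bounded by a constant depending only on $\Pi$, $|f|_{W^{2,\I}(\Omega)}$, $\alpha$, and $|\eta|_{W^{2,\I}(\Omega)}$. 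Choosing $\alpha=1$ and recalling $\eta\equiv 1$ on $\Sigma$ yields the desired semi-concavity estimate $u^\epsilon_{\xi\xi}\le C$ on $\Sigma$ uniformly in $\epsilon\in(0,1)$ and $|\xi|=1$.

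The main obstacle lies in the absorption step. Because $\beta'_\epsilon$ can be as large as $1/\epsilon$, the ``bad'' linear-in-$U$ term $C\beta'_\epsilon U$ must be balanced against the good dissipation $\beta'_\epsilon\theta\,U^2$ that the uniform convexity \eqref{Hassump} of $H$ provides, while the remaining $CU$ piece is absorbed into the Bernstein dissipation $\alpha\lambda|D^2u^\epsilon|^2\ge\alpha\lambda U^2$ supplied by the once-differentiated equation. Both mechanisms are indispensable, and the $\alpha\lambda$ dissipation relies crucially on $F$ being $x$-independent: if $F$ depended on $x$, differentiating \eqref{penalizedNox} once would produce the extra term $u^\epsilon_{x_k}F_{x_k}(D^2u^\epsilon,x)$ whose growth in $|D^2u^\epsilon|$ is, by \eqref{barFXassump2}, only linear rather than the quadratic dependence needed for absorption---this is precisely the technical obstruction flagged in Remark~\ref{WiegRem}. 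Finally, as in the earlier lemmas, the smoothness hypothesis \eqref{ExtraSmooth} is removed by approximating $F$, $H$, $f$ by smooth functions with the same structural constants in $\Pi$ and passing to the limit via the $\epsilon$-uniform estimates.
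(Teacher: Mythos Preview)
Your argument is correct, but it uses a different Bernstein function than the paper. The paper studies
\[
v^\epsilon(x)=\eta^2(x)\Big\{\big[(u^\epsilon_{\xi\xi}(x))^+\big]^2 + \alpha\,\beta_\epsilon(H(Du^\epsilon(x))) + \mu\,|Du^\epsilon(x)|^2\Big\},
\]
whereas you take the simpler $v=\eta^2\bigl(u^\epsilon_{\xi\xi}+\alpha|Du^\epsilon|^2\bigr)$ and maximize jointly in $(x,\xi)$. The essential distinction lies in how the $\beta'$-weighted dissipation is produced: the paper inserts the penalty $\alpha\beta_\epsilon(H(Du^\epsilon))$ into the test function so that the computation of $Lv$ generates the strong term $-\alpha\beta'\lambda\theta\,\eta^2|D^2u^\epsilon|^2$ (via \eqref{ElemIneq} applied with $T=D^2H$), while you extract the weaker but sufficient term $-\beta'\theta\,(u^\epsilon_{\xi\xi})^2$ directly from \eqref{DiffTwiceEqn} through the inequality $|Du^\epsilon_\xi|^2\ge (u^\epsilon_{\xi\xi})^2$. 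Both mechanisms absorb the bad linear term $C\beta' U$ coming from the cutoff; your route is shorter and avoids introducing the extra parameter $\mu$, while the paper's yields dissipation in the full Hessian norm rather than just the directional second derivative.

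One small correction to your closing discussion: the obstruction when $F$ depends on $x$ does \emph{not} arise in the once-differentiated equation. The extra term $u^\epsilon_{x_k}F_{x_k}(D^2u^\epsilon,x)$ there is, by \eqref{barFXassump2}, of order $|D^2u^\epsilon|$ and is harmlessly absorbed into $-\alpha\lambda|D^2u^\epsilon|^2$. The genuine problem, exactly as Remark~\ref{WiegRem} explains, is the third-order term $-2F_{M_{ij},\xi}u^\epsilon_{x_ix_j\xi}$ that appears in \eqref{DiffTwiceEqn} when $F$ has $x$-dependence; this term has no sign and cannot be controlled by the available negative terms. Also, the final sentence about removing the smoothness hypothesis \eqref{ExtraSmooth} is not part of this lemma's proof; in the paper that step is carried out separately in Section~\ref{SecConv}.
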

\begin{proof}
We first bound the function
$$
v^\epsilon(x):=\eta^2(x)\left\{\left[(u^\epsilon_{\xi\xi}(x))^+\right]^2 +\alpha\beta_\epsilon(H(Du^\epsilon(x))) + \mu|Du^\epsilon(x)|^2\right\}, \quad x\in \Omega
$$
from above, independently of $\epsilon \in (0,1)$. Here $\xi\in \R^n$ with $|\xi|=1$, $\eta\in C^\infty_c(\Omega)$ satisfies $0\le \eta\le 1$ and $\alpha$ and $\mu$ are positive numbers
that we will choose below.  

\par At any point $x$ for which $u_{\xi\xi}>0$ we suppress function arguments, $\epsilon$ dependence, and use \eqref{1stDerPenEqn} and \eqref{DiffTwiceEqn} to compute 
\begin{align}\label{W2INF}
F_{M_{ij}}v_{x_i x_j} +\beta'H_{p_k}v_{x_k} & = 2(F_{M_{ij}}\eta_{x_i}\eta_{x_j} + \eta F_{M_{ij}}\eta_{x_i x_j} )( (u_{\xi\xi})^2 +\alpha \beta + \mu|Du|^2) \nonumber \\
& \quad +4 F_{M_{ij}}\eta\eta_{x_i} ( 2 u_{\xi\xi} u_{\xi\xi x_j} +\alpha \beta'DH\cdot Du_{x_j} + 2\mu Du\cdot Du_{x_j}) \nonumber  \\
& \quad + \eta^2\left[ 2F_{M_{ij}}u_{\xi\xi x_i}u_{\xi\xi x_j}  +2 u_{\xi\xi}\left(- F_{M_{ij}M_{i'j'}}u_{x_i x_j\xi}u_{x_i'x_j'\xi}   \right.  \right. \nonumber \\
& \quad -\beta''(DH\cdot Du_\xi)^2 - \beta' D^2HDu_\xi\cdot Du_\xi  +f_{\xi\xi}\Big) \nonumber \\
& \quad + \alpha \beta''F_{M_{ij}}(DH\cdot Du_{x_i})(DH\cdot Du_{x_j})  \nonumber \\
& \quad + \alpha \beta'(F_{M_{ij}}D^2HDu_{x_i}\cdot Du_{x_j}+ H_{p_k}f_{x_k})\nonumber \\
& \quad + 2\mu(F_{M_{ij}}Du_{x_i}\cdot Du_{x_j}+ u_{x_k}f_{x_k}) \Big] \nonumber \\
& \quad + 2\beta'\eta\eta_{x_k} H_{p_k}( (u_{\xi\xi})^2 +\alpha \beta + \mu|Du|^2).
\end{align}
\par Let us now estimate each term on the right hand side of \eqref{W2INF} individually; we will use the conclusions of the previous lemmas, assumed positivity of $u_{\xi\xi}>0$, the convexity of $F$ and $H$, and the properties of $\beta$ \eqref{betaAss}.  Each constant below may depend on $\Pi$, $|f|_{W^{2,\I}(\Omega)}$ and $|\eta|_{W^{2,\infty}(\Omega)}$ but will be independent of $\alpha$ and $\mu$.  
\begin{align}\label{W2INFindep}
\begin{cases}
2(F_{M_{ij}}\eta_{x_i}\eta_{x_j} + \eta F_{M_{ij}}\eta_{x_i x_j} )( (u_{\xi\xi})^2 +\alpha \beta + \mu|Du|^2)\le C(|D^2u|^2 + \alpha +\mu) \\ \\
8 F_{M_{ij}}\eta\eta_{x_i} u_{\xi\xi} u_{\xi\xi x_j} \le \lambda\eta^2|Du_{\xi\xi}|^2 +C|D^2u|^2 \\\\
4 F_{M_{ij}}\eta\eta_{x_i} (\alpha \beta'DH\cdot Du_{x_j} + 2\mu Du\cdot Du_{x_j})\le C(\alpha\beta'|D^2u| +\mu|D^2u|) \\\\
 2\eta^2F_{M_{ij}}u_{\xi\xi x_i}u_{\xi\xi x_j}\le -2\lambda \eta^2|Du_{\xi\xi}|^2  \\\\
 2 u_{\xi\xi}\left(- F_{M_{ij}M_{i'j'}}u_{x_i x_j\xi}u_{x_i'x_j'\xi}-\beta''(DH\cdot Du_\xi)^2 - \beta' D^2HDu_\xi\cdot Du_\xi   \right)\le 0 \\\\
 2 \eta^2u_{\xi\xi}f_{\xi\xi}\le C|D^2u|\\\\
 \alpha \beta''F_{M_{ij}}(DH\cdot Du_{x_i})(DH\cdot Du_{x_j}) \le 0\\\\
 \alpha \beta'\eta^2(F_{M_{ij}}D^2HDu_{x_i}\cdot Du_{x_j}+ H_{p_k}f_{x_k})\le \alpha\beta'(-\lambda\theta|D^2u|^2+C )\\\\
  2\mu\eta^2(F_{M_{ij}}Du_{x_i}\cdot Du_{x_j}+ u_{x_k}f_{x_k})\le 2\mu(-\lambda |D^2u|^2+C) \\\\
  2\beta'\eta\eta_{x_k} H_{p_k}( (u_{\xi\xi})^2 +\alpha \beta + \mu|Du|^2)\le 2\beta'C(|D^2u|^2+\alpha+\mu)
\end{cases}.
\end{align}
Combining \eqref{W2INF} with \eqref{W2INFindep} gives 
\begin{align}\label{W2INFnew}
F_{M_{ij}}v_{x_i x_j} +\beta'H_{p_k}v_{x_k}& \le \beta'\left\{C_0\left[\alpha(|D^2u|+1)+\mu +|D^2u|^2\right] -\alpha\theta\lambda |D^2u|^2 \right\}\nonumber \\
& \quad + \left\{C_0\left[\mu(|D^2u|+1)+\alpha +1+|D^2u|^2\right] -2\mu\lambda |D^2u|^2 \right\}
\end{align}
for some universal constant $C_0>0$. Recall this inequality holds on $\Omega\cap\{u_{\xi\xi}>0\}.$ 

\par Select $x_0\in \overline{\Omega}$ that maximizes $v$.  If $x_0\in \partial\Omega$ or $u_{\xi\xi}(x_0)\le 0$, then the desired upper bounds on $v$ are immediate.  Alternatively, 
if $x_0\in\Omega\cap\{u_{\xi\xi}>0\}$, we have from \eqref{W2INFnew} that at $x_0$
\begin{align}\label{W2INFnewATxo}
0& \le \beta'\left\{C_0\left[\alpha(|D^2u|+1)+\mu +|D^2u|^2\right] -\alpha\theta\lambda |D^2u|^2 \right\}\nonumber \\
& \quad + \left\{C_0\left[\mu(|D^2u|+1)+\alpha +1+|D^2u|^2\right] -2\mu\lambda |D^2u|^2 \right\}.
\end{align}
One of the two parenthesized expressions in \eqref{W2INFnewATxo} must be nonnegative, or the inequality in \eqref{W2INFnewATxo} cannot hold. We now choose
\begin{equation}\label{ChoiceAM}
\alpha:=\frac{2C_0}{\theta\lambda}\quad\text{and}\quad \mu:=\frac{C_0}{\lambda}. \nonumber
\end{equation}
In view of the expressions in the parentheses in inequality \eqref{W2INFnewATxo}, it now follows that $|D^2u(x_0)|$ and in turn $v(x_0)$ is bounded above independently of $\epsilon\in (0,1)$. As explained in the proof of Lemma \ref{GradBoundUeps}, 
we conclude after choosing appropriate test functions $\eta$ to localize our uniform $L^\infty$ bound on $v^\epsilon$.
\end{proof}
\begin{rem}\label{WiegRem}
If $F$ depends explicitly on $x$, \eqref{W2INF} becomes
\begin{align*}\label{W2INFwX}
F_{M_{ij}}v_{x_i x_j} +\beta'H_{p_k}v_{x_k} & = 2(F_{M_{ij}}\eta_{x_i}\eta_{x_j} + \eta F_{M_{ij}}\eta_{x_i x_j} )( (u_{\xi\xi})^2 +\alpha \beta + \mu|Du|^2) \nonumber \\
& \quad +4 F_{M_{ij}}\eta\eta_{x_i} ( 2 u_{\xi\xi} u_{\xi\xi x_j} +\alpha \beta'DH\cdot Du_{x_j} + 2\mu Du\cdot Du_{x_j}) \nonumber  \\
& \quad + \eta^2\left[ 2F_{M_{ij}}u_{\xi\xi x_i}u_{\xi\xi x_j}  +2 u_{\xi\xi}\left(- F_{M_{ij}M_{i'j'}}u_{x_i x_j\xi}u_{x_i'x_j'\xi}   \right.  \right. \nonumber \\
& \quad - 2F_{M_{ij}, \xi}u_{x_ix_j \xi}- F_{\xi\xi}-\beta''(DH\cdot Du_\xi)^2 - \beta' D^2HDu_\xi\cdot Du_\xi  +f_{\xi\xi}\Big) \nonumber \\
& \quad + \alpha \beta''F_{M_{ij}}(DH\cdot Du_{x_i})(DH\cdot Du_{x_j})  \nonumber \\
& \quad + \alpha \beta'(F_{M_{ij}}D^2HDu_{x_i}\cdot Du_{x_j}+ H_{p_k}(f_{x_k}-F_{x_k}))\nonumber \\
& \quad + 2\mu(F_{M_{ij}}Du_{x_i}\cdot Du_{x_j}+ u_{x_k}(f_{x_k}-F_{x_k})) \Big] \nonumber \\
& \quad + 2\beta'\eta\eta_{x_k} H_{p_k}( (u_{\xi\xi})^2 +\alpha \beta + \mu|Du|^2).
\end{align*}
Unfortunately, the third derivative term $2F_{M_{ij}, \xi}u_{x_ix_j \xi}$ doesn't have a sign nor is it obviously controlled by $2F_{M_{ij}}u_{\xi\xi x_i}u_{\xi\xi x_j}$. This 
is the same issue L. C. Evans discovered in the case $F(M,x)=-a(x)\cdot M$. M. Wiegner circumvented this issue by using the more general Bernstein function 
$$
v^\epsilon(x):=\eta^2(x)\left\{|D^2u^\epsilon(x)|^2 +\alpha\beta_\epsilon(H(Du^\epsilon(x))) + \mu|Du^\epsilon(x)|^2\right\}, \quad x\in \Omega.
$$
However, when one carries out the computation of $F_{M_{ij}}v_{x_i x_j} +\beta'H_{p_k}v_{x_k}$ one encounters the term
\begin{equation}\label{GenHEssionFterm}
-u^\epsilon_{x_k x_l}F_{M_{ij}M_{i'j'}}u^\epsilon_{x_kx_i x_j}u^\epsilon_{x_l x_{i'}x_{j'}} 
\end{equation}
and it is unclear how to exploit the convexity of $F$. Of course when $F(M,x)=-a(x)\cdot M$, \eqref{GenHEssionFterm} vanishes and this
is a nonissue. 
\end{rem}

\section{Convergence}\label{SecConv}
In this section, we prove parts $(ii)$ and $(iii)$ of Theorem \ref{mainTHM}.  We first 
prove these assertions under the smoothness assumption \eqref{ExtraSmooth}, then remove these assumptions 
by an approximation argument. Moreover, we provide all the details for part $(ii)$ and omit the proof of part $(iii)$ as it follows similarly to part $(ii)$. The main insight for part $(iii)$ was accomplished in our a priori $W^{2,\infty}_\text{loc}(\Omega)$ estimate of $u^\epsilon$ in  Lemma \ref{W2INflem}. 

\begin{proof} (part $(ii)$ of Theorem \ref{mainTHM})
1. In Lemma \ref{GradBoundUeps} and Corollary \ref{CorW2p}, we established the existence of a unique classical solution $u^\epsilon\in C^\I(\Omega)\cap C(\overline{\Omega})$ 
of \eqref{penalized} that satisfies 
$$
\begin{cases}
|u^\epsilon|_{L^\I(\Omega)}\le C_1(\Pi)\\
|Du^\epsilon|_{L^\I(\Sigma)}\le C_2(1/\dist(\Sigma,\partial\Omega),\Pi)\\
|D^2u^\epsilon|_{L^p(\Sigma)}\le C_3(1/\dist(\Sigma,\partial\Omega),\Pi, \Sigma, p)
\end{cases}
$$
for each open $\Sigma\subset\subset\Omega$, $p\in (n,\infty)$ and $\epsilon\in (0,1)$.  In particular, we note $\{u^\epsilon\}_{\epsilon\in (0,1)}\subset C^{1,1-n/p}_{\text{loc}}(\Omega)$ is bounded by Morrey's inequality. Consequently, there is a function
$w\in W^{2,p}_\text{loc}(\Omega)\cap W^{1,\infty}_\text{loc}(\Omega)$ and a sequence $\{\epsilon_k\}_{k\in\N}\subset(0,1)$ decreasing to $0$ as $k\rightarrow \infty$ such that $u^{\epsilon_k}$ converges to $w$ in $C^{1,1-n/p}(\Sigma)$ and weakly in $W^{2,p}(\Sigma)$ for each $\Sigma\subset\subset\Omega$. 
\par Now define 
$$
v(x):=
\begin{cases}
w(x), \quad x\in \Omega\\
\varphi(x), \quad x\in \partial\Omega
\end{cases}.
$$
By \eqref{uepsLinf}, $\underline{u}\le v\le \overline{u}$ on $\Omega$. As $\underline{u}, \overline{u}\in C(\overline{\Omega})$ and $\underline{u}\equiv\overline{u}$ on $\partial\Omega$, it follows that $v\in C(\overline{\Omega})$. 
We now claim that $v$ is a viscosity solution of the PDE \eqref{MainPDE}. By part $(i)$ of 
Theorem \ref{mainTHM}, we would then have $u\equiv v\in W^{2,p}_\text{loc}(\Omega)\cap W^{1,\infty}_\text{loc}(\Omega)$ for each $p\ge 1$. Moreover, as $H$ is assumed to be uniformly convex and $H(Du)\le 0$, we would additionally have $u\in W^{1,\infty}(\Omega)$.

\par 2. Let $x\in \Omega$ and suppose $v-\psi$ has a local maximum at $x_0$ where $\psi\in C^\infty(\Omega)$. We will show 
\begin{equation}\label{SubSolnINeq}
\max\{F(D^2\psi(x_0), x_0)-f(x_0), H(D\psi(x_0))\}\le 0.
\end{equation}
By adding $\frac{\rho}{2}|x-x_0|^2$ to $\psi$ and later sending $\rho\rightarrow 0^+$, we may assume that $v-\psi$ has a {\it strict} local maximum.  
As $u^{\epsilon_k}$ converges locally uniformly to $v$, there is a sequence $\Omega\ni x_k\rightarrow x_0$ such that $u^{\epsilon_k}-\psi$ 
has a local maximum at $x_k$. As $u^{\epsilon_k}$ is a classical solution of \eqref{penalized}, 
$$
F(D^2\psi(x_k),x_k)+\beta_{\epsilon_k}(H(D\psi(x_k))) \le f(x_k). 
$$
Given that $\beta_{\epsilon_k}\ge 0$, it follows $F(D^2\psi(x_k),x_k)\le f(x_k)$.  And after sending $k\rightarrow\infty$,
$$
F(D^2\psi(x_0), x_0)\le f(x_0).
$$
\par Recall that Lemma \ref{betaBound} implies 
$$
0\le \beta_{\epsilon_k}(H(D\psi(x_k)))=\beta_{\epsilon_k}(H(Du^{\epsilon_k}(x_k)))\le C
$$
for all sufficiently large $k\in \N$. By \eqref{betaAss}, it must also be that 
$$
H(D\psi(x_0))\le 0.
$$
Thus, inequality \eqref{SubSolnINeq} holds. 
\par 3. Conversely, assume that $v-\psi$ has a local minimum at $x_0$ where again $\psi\in C^\infty(\Omega)$. We claim
\begin{equation}\label{SupSolnINeq}
\max\{F(D^2\psi(x_0), x_0)-f(x_0), H(D\psi(x_0))\}\ge 0.
\end{equation}
Since we can subtract $\frac{\rho}{2}|x-x_0|^2$ from $\psi$ and later send $\rho\rightarrow 0^+$, we may assume $v-\psi$ has a {\it strict} local 
minimum. In this case, there is a sequence $\Omega\ni x_k\rightarrow x_0$ such that $u^{\epsilon_k}-\psi$ 
has a local minimum at $x_k$. As $u^{\epsilon_k}$ is a classical solution of \eqref{penalized}, 
\begin{equation}\label{SuperSolnk}
F(D^2\psi(x_k),x_k)+\beta_{\epsilon_k}(H(D\psi(x_k))) \ge f(x_k). 
\end{equation}
If $H(D\psi(x_0))\ge 0$, then \eqref{SupSolnINeq} clearly holds. Let us assume on the contrary that 
$$
H(D\psi(x_0))<0.
$$
Since $Du^\epsilon_k(x_k)\rightarrow Dv(x_0)=D\psi(x_0)$, 
$$
H(D\psi(x_k))<0
$$
for all sufficiently large $k$. Hence, $\beta_{\epsilon_k}(H(D\psi(x_k)))=0$ for all large $k$ and passing to the limit in \eqref{SuperSolnk}
gives
$$
F(D^2\psi(x_0), x_0)\ge f(x_0).
$$
This proves \eqref{SupSolnINeq}.

\par 4. So far, we have proved part $(ii)$ of Theorem \ref{mainTHM} under the additional
smoothness assumption \eqref{ExtraSmooth}. Now let $F$, $H$ and $f$ be as described in part $(ii)$ of statement of Theorem \ref{mainTHM}. We say a function $h:\Mn\rightarrow\R$ is integrable if when considered as a function of the $n^2$
variables $M_{11},\dots,M_{ij},\dots,M_{nn}$ it is integrable on $\R^{n^2}$ with respect to Lebesgue measure. In this case, we define the integral of $h$ to be
$$
\int_{\Mn} h(M)dM:=\int_{\R^{n^2}} h\left(M_{11},\dots,M_{ij},\dots,M_{nn}\right)\prod^n_{i,j=1}dM_{ij}.
$$
This allows us a convenient smoothing of $F=F(M,x)$ in the $M$ variable by a standard mollifier on $\Mn$. 

\par To this end, we select $\rho=\rho(M)$ that only depends on $|M|$ and satisfies 
$$
\begin{cases}
\rho\in C^\I(\Mn)\\
\rho\ge 0\\
\int_{\Mn} \rho(M)dM=1\\
\rho(M)=0,\quad |M|\ge 1
\end{cases}
$$
and define $\rho^{\delta}(M):=\delta^{-n^2}\rho(M/\delta)$ for $\delta>0$.  Likewise, we let $\varsigma$ denote a standard mollifier on $\R^n$ and set
$$
F^{\delta}(M,x):=\int_{B_\delta(0)}\int_{\Mn}\rho^{\delta}(N)\varsigma^\delta(y)F(M-N,x-y)dNdy, \quad (M,x)\in \Mn\times\Omega_\delta. 
$$
Here 
$$
\Omega_\delta:=\left\{x\in\Omega: \dist(x,\partial\Omega)>\delta\right\}
$$
and $\varsigma^\delta(y):=\delta^{-n}\varsigma(y/\delta)$.  It is readily verified that $F^{\delta}\in C^\I(\Mn\times\Omega_\delta)$ and satisfies \eqref{FUnifEll}, \eqref{Fconvex}, and \eqref{FXassump} with constant 
$(1+\delta)\Upsilon$ replacing $\Upsilon$. 

\par We may of course argue similarly to obtain smooth approximations of $H$ and $f$. Mollifying $H$, we obtain $H^\delta=\varsigma^\delta*H\in C^\I(\R^n)$ which converges to $H$ in $C^{1}_\text{loc}(\R^n)$ as $\delta\rightarrow 0^+$
and also satisfies \eqref{Hassump} at every $p\in \R^n$.  We also have that $f^\delta=\varsigma^\delta*f$ provides a smooth approximation of $f$ that converges locally uniformly to $f$ as $\delta\rightarrow 0^+$ on $\Omega$. Moreover, 
it routinely follows that $|Df^\delta|_{L^\infty(\Omega_\delta)}\le |Df|_{L^\infty(\Omega)}$.

 \par 5. As $\partial\Omega$ is smooth, there is $\delta_1>0$ and sufficiently small such that $\partial\Omega_\delta$ is smooth for $\delta\in (0,\delta_1)$; see Lemma 14.16 of \cite{GT}. Employing the penalty method as outlined in the previous section and passing to the limit as in parts 1-3 of this proof, we obtain a viscosity solution 
 $v^\delta\in W^{1,\infty}(\Omega_\delta)\cap W^{2,p}_\text{loc}(\Omega_\delta)$
 of the following boundary value problem
 \begin{equation}\label{deltaMainPDE}
 \begin{cases}
 \max\{F^\delta(D^2v,x)-f^\delta(x),H^\delta(Dv)\}=0, \quad x\in \Omega_\delta\\
 \hspace{2.4in} v=\underline{u},\quad x\in\partial\Omega_\delta
 \end{cases}
 \end{equation}
 for $\delta\in (0,\delta_1)$.  We define 
 $$
 u^\delta:=
 \begin{cases}
 v^\delta, \quad x\in\Omega_\delta\\
 \underline{u}, \quad x\in \overline{\Omega}\setminus \Omega_\delta
 \end{cases}
 $$
 and claim $u^\delta$ converges uniformly to $u$, the unique viscosity solution of \eqref{MainPDE} subject to the boundary condition \eqref{newBC}. 
 
 \par Observe that $u^\delta\in W^{1,\infty}(\Omega)$ and $H^\delta(Du^\delta(x))\le 0$ for almost every $x\in\Omega$.  It follows from the uniform convexity assumptions on $H$ \eqref{Hassump}  that the family of functions $\{u^\delta\}_{0<\delta<\delta_1}$ is uniformly 
 bounded and equicontinuous. Let  $\{\delta_k\}_{k\in \N}$ be a sequence of positive numbers tending to $0$ as $k\rightarrow \infty$. By the Arzel\`{a}-Ascoli theorem, the sequence of functions $\{u^{\delta_k}\}_{k\in \N}$ 
 has a subsequence $\{u^{\delta_{k_j}}\}_{j\in \N}$  converging uniformly to some $w\in C(\overline{\Omega})$ as $j\rightarrow\infty$. Letting $\delta=\delta_{k_j}$ in \eqref{deltaMainPDE} and passing to the limit as $j\rightarrow \infty$, we appeal to the stability of viscosity solutions under uniform convergence to 
 conclude that $w$ solves \eqref{MainPDE} and satisfies the boundary condition \eqref{newBC}.  Hence $w\equiv u$, and since the sequence $\{\delta_k\}_{k\in \N}$ was arbitrary, $u^\delta\rightarrow u$ uniformly on $\overline{\Omega}$.
 
 \par Now let $\Sigma\subset\subset\Omega$ be open and choose $\delta_2\in (0,\delta_1)$ to ensure $\Sigma\subset\subset\Omega_{\delta_2}$. Notice
 $$
0< \dist(\Sigma,\partial\Omega_{\delta_2})\le \dist(\Sigma,\partial\Omega_\delta)\le \dist(\Sigma,\partial\Omega)
 $$
 for $\delta\in (0,\delta_2)$. From the estimate in Corollary \ref{CorW2p} and our argument from part 1 of this proof, we have for $p\ge 1$
 \begin{equation}\label{D2udeltabound}
\left|D^2u^\delta\right|_{L^p(\Sigma)}\le C
 \end{equation}
where $C$ depends $n, p, \lambda,\Lambda,\theta,\Theta, \Sigma$ and
\begin{equation}\label{DeltaPiList}
\begin{cases}
(1+\delta)\Upsilon\\
\diam(\Omega_\delta)\\
H^\delta(0)\\
|DH^\delta(0)|\\
|F^\delta(O_n,\cdot)|_{L^\I(\Omega_\delta)}\\
|f^\delta|_{W^{1,\I}(\Omega_\delta)}\\
|\underline{u}|_{W^{1,\I}(\Omega_\delta)}\\
(\dist(\Sigma,\partial \Omega_\delta))^{-1}
\end{cases}.
\end{equation}
As each quantity in \eqref{DeltaPiList} is bounded uniformly in $\delta\in (0,\delta_2)$, \eqref{D2udeltabound} implies $\{u^\delta\}_{0<\delta<\delta_2}$ is bounded in $W^{2,p}(\Sigma)$. It follows that 
$u^\delta\rightharpoonup u$ in $W^{2,p}(\Sigma)$ and in particular that $u\in W^{2,p}(\Sigma)$. As $\Sigma$ was arbitrary,  $u\in W^{2,p}_{\text{loc}}(\Omega).$ 
\end{proof}


\end{document}